\newif\ifpictures
\numberwithin{equation}{section}
\newtheorem{thm}{Theorem}
\newtheorem{lemma}[thm]{Lemma}
\newtheorem{cor}[thm]{Corollary}
\newtheorem{conj}[thm]{Conjecture}
\newtheorem{defn}[thm]{Definition}
\numberwithin{thm}{section}
\newcounter{FNC}[page]
\def\newfootnote#1{{\addtocounter{FNC}{2}$^\fnsymbol{FNC}$%
     \let\thefootnote\relax\footnotetext{$^\fnsymbol{FNC}$#1}}}
\newcommand{\N}{\mathbb{N}}
\definecolor{DarkGreen}{rgb}{0,0.65,0}
\definecolor{MyPurple}{rgb}{0.9,0,0.9}
\definecolor{MyYellow}{rgb}{0.6,0.6,0}
\DeclareMathOperator{\cp}{cp}
\title[A Sharp Upper Bound for the Complexity of Labeled Oriented Trees]{A Sharp Upper Bound for the Complexity of Labeled Oriented Trees}
\author{Moritz Christmann} \author{Timo de Wolff}
\address{Moritz Christmann, Goethe University, D-60054 Frankfurt am Main, Germany\medskip \newline 
\hspace*{8pt} Timo de Wolff, Texas A\&M University, Department of Mathematics, College Station, TX 77843-3368, USA \medskip}
\email{christma@stud.uni-frankfurt.de \\
dewolff@math.tamu.edu}
\subjclass[2010]{05C10, 05C25, 20F65, 57M15, 57M20}
\keywords{Aspherical, Labeled Oriented Tree, LOT, Whitehead Conjecture}
\begin{document}

\begin{abstract}
A labeled oriented graph (LOG) is an oriented graph with a labeling function from the edge set into the vertex set. The complexity of a LOG is the minimal cardinality of an initial set $S$ of vertices such that every vertex can be reached successively from $S$ only using edges with labels in $S$ or already visited vertices. We give a constructive proof of a conjecture by Rosebrock stating that for an interior reduced, connected LOG with $m$ vertices the complexity is at most $(m+1)/2$ and show that this bound is sharp.

Due to results of Howie labeled oriented trees (LOTs) yield crucial candidates for counterexamples of the Whitehead Conjecture stating that every subcomplex of an aspherical 2-complex is aspherical. We explicitly describe the structure of LOTs of maximal complexity $(m+1)/2$. We conclude that the 2-complexes associated to these LOTs are always aspherical excluding them from the list of possible counterexamples.
\end{abstract}

\maketitle

\section{Introduction}

Let $G$ be a finitely presented group with presentation $P := \langle x_1,\ldots,x_n \ | \ R_1,\ldots,R_s \rangle$. Assume, every relation $R_j$ is of the form $x_k x_i x_k^{-1} = x_j$ with $x_i \neq x_j$. We associate a graph $\Gamma(P)$ to $P$ in the following way:
\begin{itemize}
 \item We define the vertex set as $V(\Gamma(P)) = \{x_1,\ldots,x_n\}$, i.e., we introduce one vertex for every generator.
 \item For every relation $x_k x_i x_k^{-1} = x_j$ we add a directed edge from $x_i$ to $x_j$.
 \item We define a labeling function $l: E(\Gamma(P)) \to V(\Gamma(P))$. Namely, if $e$ is the edge from $x_i$ to $x_j$ induced by the relation $x_k x_i x_k^{-1} = x_j$, then we set $l(e) := x_k$.
\end{itemize}
We say that $\Gamma(P)$ is a \textit{labeled oriented graph (LOG)}. And if $\Gamma(P)$ is a tree, then we call $\Gamma(P)$ a \textit{labeled oriented tree (LOT)}. If the context is clear, then we simply write $\Gamma$. See Figure \ref{Fig:LOTExample} for an example.

Note that analogously every LOG $\Gamma$ yields a \textit{LOG presentation} $P(\Gamma)$, i.e., a group presentation in the upper sense. Therefore, we call each $x_j$ simultaneously a \textit{vertex} and a \textit{generator} each $e_{ij}$ simultaneously an \textit{edge} and a \textit{relation} respectively. Following the literature \cite{Harlander:Rosebrock} we call a LOG $\Gamma$ \textit{interior reduced} if the label $l(e_{ij})$ of every edge $e_{ij} = (x_i,x_j)$ satisfies $l(e_{ij}) \notin \{x_i,x_j\}$. See \cite{Howie:Whitehead,Howie:RibbonDiscs, Rosebrock:WhiteheadOverview, Rosebrock:LOTComplexity} for more information.

\begin{figure}
\begin{tikzpicture}
   \draw [->] (2,0) -- (2,1);
   \draw (2,1) -- (2,2);
   \draw [->] (0,2) -- (1,1);
   \draw (1,1) -- (2,0);
   \draw [->] (2,0) -- (3,1);
   \draw (3,1) -- (4,2);
   
   \draw [fill] (2,0) circle [radius=0.1];
   \draw [fill] (0,2) circle [radius=0.1];
   \draw [fill] (4,2) circle [radius=0.1];
   \draw [fill] (2,2) circle [radius=0.1];
   
   \node [above] at (0,2.2) {$a$};
   \node [above] at (2,2.2) {$b$};
   \node [above] at (4,2.2) {$c$};
   \node [below] at (2,-0.3) {$d$};
   
   \node [right] at (1.1,1) {$c$};
   \node [right] at (2,1) {$c$};
   \node [right] at (3.1,1) {$a$};
\end{tikzpicture}
\caption{The LOT corresponding to the presentation $\langle a,b,c,d \ | \ cac^{-1} = d, cdc^{-1} = b, ada^{-1} = c\rangle$.}
\label{Fig:LOTExample}
\end{figure}
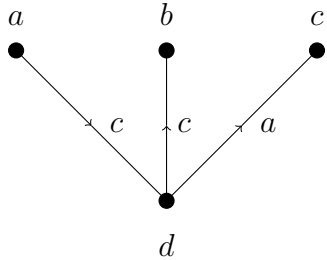

It was shown by Howie \cite{Howie:Whitehead, Howie:RibbonDiscs} that LOTs are decisively connected to the Whitehead Conjecture \cite{Whitehead:Conjecture}, which claims that every subcomplex of an aspherical 2-complex is aspherical. Recall in this context that a 2-complex is aspherical if its second homotopy group is trivial. In particular, Howie showed that if every 2-complex corresponding to a LOT  presentation (see Section \ref{Sec:Preliminaries} for details on the associated 2-complex) is aspherical, then the Andrews-Curtis Conjecture \cite{Andrews:Curtis:Conjecture} implies the finite Whitehead Conjecture (see Section \ref{Sec:Whitehead} for details).\\

Following a similar construction of Ivanov \cite{Ivanov}, the \textit{complexity} $\cp(\Gamma)$ of a LOG $\Gamma$ was introduced by Rosebrock in \cite{Rosebrock:LOTComplexity} as an invariant carrying information about the second homotopy group of the 2-complex associated to $\Gamma$. Particularly, Rosebrock showed as a main result in \cite{Rosebrock:LOTComplexity} that every LOT $\Gamma$ satisfying $\cp(\Gamma) = 2$ has an aspherical 2-complex.

Essentially, the complexity $\cp(\Gamma)$ is the minimal cardinality of a subset $S \subseteq V(\Gamma)$ of the vertices of $\Gamma$, which allows to reach every vertex of $\Gamma$ by  
\begin{enumerate}
 \item starting at the vertices in $S$ and 
 \item only passing edges, which are labeled with vertices in $S$ or formerly visited vertices.
\end{enumerate}
We give a formal definition in Section \ref{Sec:Complexity}.

It was conjectured by Rosebrock\footnote{We remark that in the final, printed version of the article \cite{Rosebrock:LOTComplexity} the conjecture is already announced as solved by the first author in his master thesis (Diplomarbeit) \cite{Christmann:Diplomarbeit}.} in \cite{Rosebrock:LOTComplexity} (and shown for LOTs with vertices of valency at most two) that the complexity $\cp(\Gamma)$ of an (interior reduced) LOT $\Gamma$ with $m$ vertices is bounded from above by $\frac{m + 1}{2}$. In the first part of the paper, we prove the conjecture. We do this in a constructive way, i.e., we give an algorithm, which yields for every $\Gamma$ a set $S \subset V(\Gamma)$ such that such that every vertex is reachable from $S$ and $|S| \leq \frac{m + 1}{2}$.

\begin{thm}[Rosebrock's Conjecture]
Let $\Gamma$ be an interior reduced, connected LOG with $m$ vertices. Then the complexity is bounded from above by
\begin{eqnarray}
 \cp(\Gamma) & \leq & \frac{m + 1}{2} \, . \label{Equ:ComplexityLOTs}
\end{eqnarray}
\label{Thm:UpperBound}
\end{thm}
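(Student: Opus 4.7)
The strategy is to design an explicit greedy algorithm that builds an initial set $S$ one vertex at a time, amortizing each insertion into $S$ against at least two newly reached vertices; this immediately yields the asserted bound. To set things up, given a current set $V$ of already visited vertices, I would call an edge $e=(u,w)$ a \emph{frontier edge} if one endpoint (say $u$) lies in $V$, the other endpoint $w$ does not, and the label $l(e)$ is also not in $V$. The closure/BFS rule swallows the other endpoint of any edge whose label and one endpoint both lie in $V$; once the closure is saturated, every edge leaving $V$ must be a frontier edge.

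The algorithm itself is then: pick an arbitrary seed $v_0$, set $S=V=\{v_0\}$, and while $V\ne V(\Gamma)$, locate a frontier edge $e=(u,w)$, insert $l(e)$ into $S$, and re-saturate $V$ under the closure rule. There are two verifications to make. First, that a frontier edge always exists while $V\subsetneq V(\Gamma)$: by connectedness some edge crosses from $V$ to its complement, and saturation forbids such an edge to have its label in $V$, so it is a frontier edge. At the very first iteration this is automatic, because any edge incident to $v_0$ has $l(e)\ne v_0$ by interior reduction. Second, that each step grows $|V|$ by at least two: the inserted vertex $l(e)$ joins $V$, and the now-activated frontier edge $e$ brings $w$ into $V$ as well; interior reduction gives $l(e)\notin\{u,w\}$ and the frontier condition gives $l(e)\notin V$, so $l(e)$ and $w$ are two genuinely new elements of $V$.

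With these two facts the count finishes the proof: after $k$ iterations one has $|S|=1+k$ and $|V|\ge 1+2k$, so termination at $|V|=m$ forces $k\le (m-1)/2$ and
\[
  \cp(\Gamma)\;\le\;|S|\;\le\;1+\frac{m-1}{2}\;=\;\frac{m+1}{2}.
\]
Since $\cp(\Gamma)$ is an integer, this also recovers the sharp integer ceiling in both parity cases (for even $m$, the parity of $|V|-1=m-1$ forces at least one iteration to produce a saturation bonus of three or more). I expect the main obstacle to be point one above — checking carefully that connectedness plus saturation always offers a frontier edge, so the process cannot get stuck before exhausting $V(\Gamma)$. Everything else is bookkeeping, and interior reduction appears to enter in exactly two places: the observation $l(e)\ne v_0$ used at the first step and the observation $l(e)\ne w$ used in the two-for-one gain.
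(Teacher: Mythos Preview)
Your argument is correct and is essentially the same greedy algorithm as the paper's proof: seed with one vertex, repeatedly add the label of an edge crossing from the reached set to its complement, and observe that interior reduction makes each such insertion pull in two new vertices. The only notable difference is that your direct inequality $m=|V|\ge 1+2k$ handles the even-$m$ case without appealing to the paper's auxiliary Lemma~\ref{Lem:AllReachable} (that reaching $m-1$ vertices already forces reaching all $m$), which is a mild streamlining; conversely, the paper first passes to a spanning tree, whereas your frontier-edge argument works directly for connected LOGs.
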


In the second part of this article, we explicitly describe the LOTs for which the upper bound given in Theorem \ref{Thm:UpperBound} is satisfied with equality; see Theorem \ref{Thm:MaximalComplexity}. In particular, we show that the 2-complexes corresponding to such LOTs are always aspherical. Thus, LOTs of maximal complexity cannot disprove the Whitehead Conjecture.

\begin{thm}
The bound given in Theorem \ref{Thm:UpperBound} is sharp. In particular, if a LOT $\Gamma$ satisfies \eqref{Equ:ComplexityLOTs} with equality, then its corresponding 2-complex is aspherical.
\label{Thm:Aspherical}
\end{thm}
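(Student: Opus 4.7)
The plan is to break the statement into two parts: \textbf{sharpness} (construction of LOTs achieving the bound) and \textbf{asphericity} (every extremal LOT has an aspherical 2-complex). Since the bound $(m+1)/2$ is an integer only for odd $m$, equality forces the parity of $m$; I would record this first and then proceed via an explicit structural classification of the extremal examples.

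For sharpness, I would exhibit an infinite family $\{\Gamma_m\}$ of interior reduced LOTs with $\cp(\Gamma_m) = (m+1)/2$ for every odd $m \geq 1$. The natural building block is the three-vertex ``boomerang'' on vertices $a,b,c$ with edges from $a$ to $b$ labeled $c$ and from $c$ to $b$ labeled $a$: a direct check shows that no singleton can reach every vertex (each non-trivial edge requires the third, unvisited generator as its label), hence the complexity is exactly $2 = (m+1)/2$. Larger examples are obtained by iteratively identifying such blocks along a single vertex, giving $m = 2k+1$ vertices whose every reaching set is forced to contain at least one ``outer'' generator per block. Verification that no reaching set of size $(m-1)/2$ exists is a short combinatorial induction on the number of blocks.

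For asphericity, I would first prove a structure theorem (announced as Theorem~\ref{Thm:MaximalComplexity}) classifying precisely the LOTs that meet the bound. The idea is to trace the constructive algorithm underlying Theorem~\ref{Thm:UpperBound} in the equality case: each time the algorithm is forced to enlarge $S$, the local neighborhood of the newly added vertex must be rigidly configured (essentially a boomerang block), for otherwise a merging move would produce a strictly smaller reaching set. Iterating this local rigidity over the whole tree globalizes to the statement that every extremal LOT is obtained by gluing boomerang blocks in a controlled way, and in particular has bounded diameter. With this structural description available, the asphericity statement follows from known criteria: either Howie's asphericity results for LOTs of small diameter, or a direct diagrammatic reducibility argument that shows any spherical picture over the presentation must already be reducible on one of the boomerang blocks, hence on the whole complex.

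The principal obstacle is the structural classification itself: extracting a clean combinatorial description of all extremal LOTs from the equality analysis of the algorithm. One must rule out subtle configurations in which a partial reaching set simultaneously activates several new vertices (thereby beating the bound), and prove that the \emph{only} obstructions to doing better are the prescribed boomerang blocks. Once this rigidity is in hand, the jump to asphericity is essentially a reference to existing machinery; the genuine combinatorial work lies on the complexity side, while Theorem~\ref{Thm:Aspherical} then follows as a clean structural consequence.
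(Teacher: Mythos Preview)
Your overall architecture matches the paper's: establish sharpness via the three-vertex ``boomerang'' (what the paper calls a \emph{Rosebrock LOT}) and its iterated gluings, then prove a structural classification (Theorem~\ref{Thm:MaximalComplexity}) of the extremal LOTs as exactly those decomposable into Rosebrock LOTs, and finally deduce asphericity from the classification. The sharpness construction and the recognition that the structural theorem is the combinatorial heart are both right and essentially identical to the paper.

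The divergence---and the gap---is in the final asphericity step. Your claim that every extremal LOT ``in particular has bounded diameter'' is false: gluing $s$ Rosebrock LOTs end to end produces an extremal LOT whose underlying tree is a path of length $2s$, so the diameter is unbounded. Consequently any appeal to ``Howie's asphericity results for LOTs of small diameter'' cannot apply to the whole tree, only to the individual blocks, and you have not said how to pass from asphericity of the pieces to asphericity of the glued complex. The diagrammatic-reducibility alternative you sketch is in the right spirit but is not a proof as stated.

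The paper closes this gap cleanly with two cited results of Rosebrock: a gluing lemma (Lemma~\ref{Lem:Amalgam}) saying that if $\Gamma=\Gamma_1\sqcup\Gamma_2$ and the $2$-complexes of $\Gamma_1,\Gamma_2$ are aspherical then so is that of $\Gamma$, together with the fact (Theorem~\ref{Thm:LOTComplexity2Aspherical}) that every LOT of complexity $2$ is aspherical. Since each Rosebrock LOT has complexity $2$, induction along the decomposition $\Gamma=\Gamma_1\sqcup\cdots\sqcup\Gamma_s$ immediately gives asphericity of $\Gamma$. Your outline would be completed by inserting precisely this amalgamation lemma in place of the bounded-diameter claim.
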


The article is organized as follows. In Section \ref{Sec:Preliminaries}, we fix some notation. In Section \ref{Sec:Whitehead}, we recall the connection of LOTs to the Whitehead Conjecture. In Section \ref{Sec:Complexity}, we prove the Theorems \ref{Thm:UpperBound} and Theorem \ref{Thm:Aspherical}. Excluding some cited technical statements, all proofs are purely combinatorial.\\

The content of this article was part of the master thesis (Diplomarbeit) \cite{Christmann:Diplomarbeit} of the first author.

\section*{Acknowledgements}
We thank Wolfgang Metzler for his support on the development of this article. We thank Martina Juhnke-Kubitzke, Chris O'Neill and Stephan Rosebrock for their many helpful comments.

The second author was partially supported by DFG grant TH 1333/2-1 and DFG grant MA 4797/3-2.

\section{Preliminaries}
\label{Sec:Preliminaries}

In this section we fix some notation. We begin with graphs. For additional background on graph theory see \cite{Diestel}. For a given graph $\Gamma$ we denote its \textit{vertex set} as $V(\Gamma)$ and its \textit{edge set} as $E(\Gamma)$. The cardinalities of these sets are denoted as $|V(\Gamma)|$ and $|E(\Gamma)|$, respectively. Although the graphs we investigate come with an orientation, it will be of no need for our purposes. Thus, we will restrict ourselves to the undirected version of a given graph in Section \ref{Sec:Complexity}. If $e \in E(\Gamma)$, then $v(e) = \{x,y\} \subset V(\Gamma)$ denotes the set of vertices incident to $e$ and we write $e = (x,y)$. Furthermore, we assume our graph is \textit{simple}, i.e., we always assume that no vertex is adjacent to itself and that two vertices are adjacent via at most one edge. If two vertices $x$ and $y$ are adjacent, we also denote the corresponding edge as $(x,y)$.

For our purposes, a \textit{labeling} of a graph $\Gamma$ is a function $l: E(\Gamma) \to V(\Gamma)$. For a path $\gamma = (V(\gamma),E(\gamma)) \subset (V(\Gamma),E(\Gamma))$ given by a tuple of vertices $V(\gamma)$ and a tuple of edges $E(\Gamma)$ we want to talk about labelings as tuples, not as sets. Thus, for us, paths are always directed. With slight abuse of notation we denote $v(\gamma) \in V(\Gamma) \times V(\Gamma)$ as the pair of start- end endpoint of $\gamma$ and $l(\gamma) \in V(\Gamma)^{|E(\gamma)|}$ as the tuple of length $\# \gamma := |E(\gamma)|$ containing the label of the $j$-th edge of $\gamma$ as $j$-th entry.\\

Let $G$ be a finitely presented group with presentation $P := \langle x_1,\ldots,x_n \ | \ R_1,\ldots,R_s \rangle$. For additional background on combinatorial group theory, see \cite{Stillwell}. If every relation $R_j$ satisfies $R_j = x_k x_i x_k^{-1} = x_j$, then $P$ induces a \textit{labeled oriented graph (LOG)} $\Gamma(P)$ as shown in the introduction. If the context is clear, we simply write $\Gamma$.
Analogously, every LOT $\Gamma$ induces a LOT presentation $P(\Gamma)$ in the upper sense.

Recall that every presentation $P$ induces a CW-complex, more specific a standard 2-complex, $C(P)$, where, next to one $0$ cell, every generator $x_i$ corresponds to a $1$-cell and every relation $R_j$ given by a word $x_{j_1} \cdots x_{j_k} = 1$ to a $2$-cell $D_j^2$ satisfying $\partial D_j^2 = x_{j_1} \cdots x_{j_k}$. Thus, every LOT $\Gamma$ has an associated standard 2-complex $C(P(\Gamma))$. 

Recall that a CW-complex $C$ is called \textit{aspherical} if its second homotopy group $\pi_2(C)$ is trivial, i.e., every continuous map $S^2 \to C$ is homotopy equivalent to a map $S^2 \to {p}$  with $p \in C$. If a 2-complex $C(P(\Gamma))$ associated to a LOT $\Gamma$ is aspherical, then, for convenience, we also say the LOT $\Gamma$ is aspherical. For more information about CW-complexes and 2-complexes respectively, see \cite{Hatcher,Stillwell}.

\section{Labeled Oriented Trees and the Whitehead Conjecture}
\label{Sec:Whitehead}

In this section we briefly recall the connection between LOTs and the Whitehead conjecture. It does not contain new results, but motivates them (particularly Corollary \ref{Cor:MaxComplLOTAspherical}) and sets as a brief survey of the literature. For a more detailed overview, see the survey \cite{Rosebrock:WhiteheadOverview}.\\

\noindent We start with the Whitehead Conjecture itself.

\begin{conj}(Whitehead \cite{Whitehead:Conjecture}, 1941)
Let $L$ be an aspherical 2-complex and let $K$ be a subcomplex of $L$. Then $K$ is aspherical.
\label{Conj:Whitehead}
\end{conj}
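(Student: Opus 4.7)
The plan is to attack Conjecture \ref{Conj:Whitehead} via Howie's program, which reduces the topological question to a combinatorial one about LOTs, and then stratify that combinatorial question by the complexity invariant $\cp$ introduced above.

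First I would invoke Howie's theorem from \cite{Howie:Whitehead}: modulo the Andrews--Curtis Conjecture, the finite form of the Whitehead Conjecture follows once one shows that $C(P(\Gamma))$ is aspherical for every LOT $\Gamma$. This cleanly isolates a purely combinatorial target and separates the group-theoretic from the topological input.

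Second, I would propose an induction on the complexity $\cp(\Gamma)$. The base case $\cp(\Gamma) = 2$ is handled by Rosebrock in \cite{Rosebrock:LOTComplexity}, and the maximal case $\cp(\Gamma) = (m+1)/2$ is established by Theorem \ref{Thm:Aspherical} of the present paper. The intermediate range $3 \leq \cp(\Gamma) \leq m/2$ would be the combinatorial heart of the argument. Here I would look for a surgery -- for instance, deleting a leaf whose label appears elsewhere in $\Gamma$, or splitting $\Gamma$ along an interior vertex that is itself a label -- that strictly decreases $\cp$ and is compatible with a gluing theorem for asphericity, such as Whitehead's asphericity lemma or the Lyndon identity theorem. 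Asphericity of $\Gamma$ would then follow from asphericity of a strictly simpler LOT, closing the induction. Combined with Howie's reduction, this would settle the finite Whitehead Conjecture.

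The main obstacle, and the reason the conjecture has resisted attack since 1941, is precisely at this surgery step: no decomposition of a general LOT into aspherical pieces with strictly smaller complexity is presently known, and the concrete gluing theorems available impose combinatorial hypotheses (e.g.\ on the labeling structure or on the relation module) that are not automatic for arbitrary LOTs. A secondary obstacle is that Howie's reduction only covers the finite case; any approach to the full conjecture must additionally address 2-complexes that are not subcomplexes of finite LOT complexes, for which the combinatorial framework of this paper provides no direct handle. I would therefore view the above as a realistic plan for the finite LOT case -- with the complexity filtration of this paper furnishing two of the strata -- while the full Whitehead Conjecture plausibly requires genuinely new topological ingredients beyond what the LOT viewpoint supplies.
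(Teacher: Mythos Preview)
The statement you were asked to prove is Conjecture~\ref{Conj:Whitehead}, the Whitehead Conjecture itself. The paper does \emph{not} prove it; it is stated there as an open conjecture and serves only as motivation for the results on LOT complexity. So there is no ``paper's own proof'' to compare against, and any purported proof must be held to the standard of actually settling a 1941 open problem.

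Your proposal is not a proof but a research program, and you are candid about this. The genuine gap is exactly where you locate it: the inductive step for $3 \le \cp(\Gamma) \le m/2$. No surgery of the kind you sketch---deleting a leaf, splitting at a labeling vertex---is known to simultaneously (i) strictly lower $\cp$ and (ii) fit into an available asphericity gluing theorem. Lemma~\ref{Lem:Amalgam} requires a decomposition $\Gamma = \Gamma_1 \sqcup \Gamma_2$ into genuine sub-LOTs with disjoint label sets, and such decompositions simply need not exist for an arbitrary LOT of intermediate complexity; indeed, indecomposability in this sense is the generic situation. So the induction has no engine.

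There is also a structural limitation you partly note but understate. Even if every LOT were shown aspherical, Howie's reduction yields only the \emph{finite} Whitehead Conjecture, and only \emph{conditionally} on the Andrews--Curtis Conjecture, which is itself open (and widely suspected to be false). Thus your program, even if its missing step were filled, would establish strictly less than Conjecture~\ref{Conj:Whitehead}. The full conjecture also has to contend with Howie's Type~(2) counterexamples (infinite ascending chains), for which, as Luft showed, a counterexample must exist if any does; the LOT framework says nothing about these.
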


In 1983 Howie showed that if the conjecture was wrong, then there are two specific ways, how it can be falsified.

\begin{thm}(Howie \cite[Theorem 3.4.]{Howie:Whitehead})
If Conjecture \ref{Conj:Whitehead} is false, then there exists a counterexample $K \subset L$ satisfying one of the following two conditions:
\begin{enumerate}
 \item $L$ is finite and contractible, and $K = L \setminus e$ for some 2-cell $e$ of $L$ or
 \item $L$ is the union of an infinite ascending chain of finite, non-aspherical subcomplexes $K = K_0 \subset K_1 \subset \cdots$ such that each inclusion map $K_i \to K_{i+1}$ is null-homotopic.
\end{enumerate}
\label{Thm:HowieCounterexampeType}
\end{thm}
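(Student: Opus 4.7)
The plan is to start with an arbitrary counterexample $K \subset L$, where $L$ is aspherical and $K$ is not, and to reduce it to one of the two specified forms. The crucial observation is that for aspherical $L$ all higher homotopy groups vanish, so any finite subcomplex $K' \subset L$ whose inclusion-induced map into $\pi_1(L)$ is trivial is already null-homotopic in $L$, via a lift through the contractible universal cover $\widetilde L$. Whether an inclusion between finite subcomplexes of $L$ is null-homotopic thus reduces to a question about fundamental groups, and by compactness of null-homotopies it is determined by finite data.

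First I would construct an ascending chain of finite subcomplexes $K = K_0 \subset K_1 \subset \cdots \subset L$ such that each inclusion $K_i \hookrightarrow K_{i+1}$ is null-homotopic. Starting with any finite $K_0 \supset K$ witnessing the non-asphericity of $K$, I would choose $K_{i+1}$ to be any finite subcomplex of $L$ containing both $K_i$ and null-homotopies in $L$ for a finite generating set of the image of $\pi_1(K_i) \to \pi_1(L)$; such a $K_{i+1}$ exists by compactness. The dichotomy of the theorem then comes down to whether all $K_i$ beyond some index are aspherical.

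If some $K_N$ is aspherical, then $K \subset K_N$ is itself a finite counterexample and I would pass to condition~(1) by a minimal counterexample argument. Minimizing, for instance, the number of cells of the ambient complex not in the subcomplex, together with a secondary invariant measuring how far the ambient is from contractible, and using elementary collapses and Andrews-Curtis-style transformations that preserve both asphericity of the ambient and non-asphericity of the subcomplex, one argues that in the minimum the ambient is contractible and is obtained from the subcomplex by attaching a single 2-cell. Otherwise every $K_i$ is non-aspherical; the chain then realizes condition~(2) with $L$ replaced by $\bigcup_i K_i$, which is itself aspherical (indeed contractible) by the null-homotopy property of the inclusions together with a direct limit argument on spheres into the union.

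The main obstacle I expect is the normal-form reduction in the finite case. Naively removing or adding cells can destroy either the asphericity of the ambient or the non-asphericity of the subcomplex, so the reduction requires identifying a controlled set of moves---elementary expansions, collapses, and Andrews-Curtis transformations on the associated presentations---that preserves the counterexample property, and then showing that the minimum under these moves has the stated normal form. The construction in case~(2) is, by comparison, a routine compactness argument once the null-homotopy criterion through $\pi_1$ is established.
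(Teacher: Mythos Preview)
The paper does not prove this statement. Theorem~\ref{Thm:HowieCounterexampeType} is quoted verbatim as Howie's result \cite[Theorem 3.4]{Howie:Whitehead}, and the surrounding Section~\ref{Sec:Whitehead} is explicitly a survey that ``does not contain new results.'' There is therefore no proof in the paper to compare your proposal against.

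As a sketch of Howie's actual argument, your dichotomy via an ascending chain of finite subcomplexes with null-homotopic inclusions is the right shape, and the compactness reasoning for case~(2) is essentially correct. However, your reduction in the finite case to the normal form of~(1) is off: you invoke ``Andrews--Curtis-style transformations'' as part of the minimization, but Howie's Theorem~3.4 does \emph{not} use Andrews--Curtis moves. The Andrews--Curtis conjecture enters only later, in his Theorem~4.2, to connect the normal form of case~(1) to ribbon discs. Conflating the two steps is a genuine gap: the reduction to $K = L \setminus e$ with $L$ finite and contractible must be carried out by a direct minimal-counterexample argument on the cell structure (adding a single cell at a time and analysing when asphericity first fails), without appealing to $Q^{**}$-moves whose availability is precisely what the Andrews--Curtis conjecture asserts.
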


It was shown by Luft \cite{Luft} that if there exists a counterexample for the Whitehead Conjecture, then there exists one of Type (2).

However, Type (1) is more interesting and accessible for our purposes. Due to the latter statement we denote the case that no counterexample of Type (1) exists as the \textit{Finite Whitehead Conjecture}. We concentrate on this case in the following.

\begin{conj}(Finite Whitehead Conjecture)
Let $L$ be a finite, aspherical 2-complex and let $K$ be a subcomplex of $L$. Then $K$ is aspherical.
\label{Conj:FiniteWhitehead}
\end{conj}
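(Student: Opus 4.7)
The Finite Whitehead Conjecture is a long-standing open problem, so any plan for it must be programmatic rather than a genuine proof attempt. The only avenue suggested by the material assembled in the paper is Howie's reduction recalled in the introduction: if the Andrews--Curtis Conjecture holds and every 2-complex $C(P(\Gamma))$ associated to a LOT $\Gamma$ is aspherical, then Conjecture \ref{Conj:FiniteWhitehead} follows. The plan is therefore to invoke Howie's implication and reduce the entire statement to (i) Andrews--Curtis, treated as a black box, and (ii) asphericity of every LOT 2-complex. By Theorem \ref{Thm:HowieCounterexampeType}, one only has to rule out Type (1) counterexamples in the finite setting, which is what Howie's program is designed to do.

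For ingredient (ii), the strategy is to stratify LOTs by the complexity invariant $\cp(\Gamma)$ introduced by Rosebrock and handle the cases separately. The bottom case $\cp(\Gamma)=2$ is Rosebrock's theorem, and the top case $\cp(\Gamma) = (m+1)/2$ is Theorem \ref{Thm:Aspherical} of the present paper. I would try to fill in the intermediate range $3 \le \cp(\Gamma) \le m/2$ by an induction on $\cp(\Gamma)$: given a minimal reaching set $S$, one cuts $\Gamma$ along the vertices of $S$ into labeled oriented subforests of strictly smaller size and, by a careful accounting of which labels still occur, strictly smaller complexity. One then invokes the inductive hypothesis on each piece and tries to glue the associated 2-complexes back to $C(P(\Gamma))$ along sub-2-complexes indexed by the labels in $S$, propagating asphericity through the pushout.

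The main obstacle is precisely this gluing step. Asphericity is notoriously fragile under pushouts, so I would need a combinatorial condition on the label function of $\Gamma$, phrased in terms of the reaching order from $S$, that forces the attaching maps to be $\pi_2$-injective in the sense of Howie's relative asphericity criterion; an even stronger route would be to prove diagrammatic reducibility directly, which implies asphericity. The proof of Theorem \ref{Thm:Aspherical} in the extremal case should provide a template for the kind of condition that works, but extending it uniformly across all intermediate complexities is genuinely hard and is where a purely combinatorial approach seems to break down. Finally, even granting (ii) fully, the dependence on Andrews--Curtis is unavoidable in this framework, so the most one can realistically aim for along these lines is a conditional proof of Conjecture \ref{Conj:FiniteWhitehead}; an unconditional argument would require a new ingredient beyond the LOT program.
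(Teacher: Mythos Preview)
The statement you were asked to prove is \emph{Conjecture}~\ref{Conj:FiniteWhitehead}, not a theorem; the paper does not prove it and does not claim to. It is recalled purely as background, and the surrounding discussion only records Howie's implication that Andrews--Curtis together with asphericity of all LOT 2-complexes would imply it. So there is no ``paper's own proof'' to compare your proposal against.

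You recognize this yourself and present a program rather than a proof, which is the honest thing to do. The two gaps you flag are genuine and unresolved. First, the dependence on Andrews--Curtis is not a technicality: it is itself a major open problem, so even a complete resolution of LOT asphericity would yield only a conditional statement along these lines. Second, and more to the point of this paper, your proposed induction on $\cp(\Gamma)$ for the intermediate range does not go through with the tools available. The maximal-complexity case in Theorem~\ref{Thm:Aspherical} works only because such LOTs admit a very rigid decomposition into Rosebrock pieces (Theorem~\ref{Thm:MaximalComplexity}), each of complexity~$2$, and asphericity is then assembled via Lemma~\ref{Lem:Amalgam}. For intermediate complexity there is no analogous structural decomposition, cutting along a minimal reaching set $S$ does not in general produce sub-LOTs (labels may refer to vertices outside the piece), and there is no known $\pi_2$-injectivity or diagrammatic-reducibility criterion that controls the pushout in that generality. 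In short, your outline correctly locates where the difficulty lies, but that difficulty is the open heart of the problem, not a step one can expect to complete by refining the present methods.
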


A \textit{3-deformation} of a $2$-complex $K$ to a $2$-complex $K'$ is, roughly spoken, given by successively gluing finitely many $3$-balls up to an open $2$-cell to the boundary of the original $2$-complex respectively by doing the inverse operation. On the group theoretical side such $3$-deformations of a $2$-complex associated to a group presentation correspond to particular transformations of this presentation, which are called $Q^{**}$ \textit{transformations}. For convenience of the reader we omit detailed definitions, which are not needed for the remaining article. For more information see \cite{Hog-Angeloni:Metzler,Wright}.

A priori it is unclear how strong the assumption is that a finite, contractible 2-complex can be 3-deformed to a point. However, the Andrews-Curtis conjecture claims precisely that this is always possible.

\begin{conj}(Andrews, Curtis \cite{Andrews:Curtis:Conjecture})
Let $L$ be a finite, contractible 2-complex. Then $L$ 3-deforms to a single vertex.
\end{conj}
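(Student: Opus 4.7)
The plan is to first translate the topological claim into pure combinatorial group theory, and then to attempt an inductive reduction on the number of generators. By collapsing a spanning tree of the $1$-skeleton of $L$ to a point one obtains, up to homotopy equivalence, a standard $2$-complex $C(P)$ associated to a finite group presentation $P = \langle x_1,\ldots,x_n \mid R_1,\ldots,R_s\rangle$. Contractibility of $L$ forces the presented group to be trivial and, from $\chi(L) = 1$, the presentation to be balanced, i.e.\ $n = s$. The classical correspondence alluded to in the excerpt (the $Q^{**}$ transformations) identifies $3$-deformations of $C(P)$ up to homotopy equivalence with sequences of Andrews--Curtis moves on $P$: replacing $R_i$ by $R_i R_j^{\pm 1}$, by $w R_i w^{-1}$, or by a free reduction, together with Nielsen moves on the generators. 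So the topological statement becomes: every balanced presentation of the trivial group AC-reduces to $\langle x_1,\ldots,x_n \mid x_1,\ldots,x_n \rangle$, which then collapses to the empty presentation.

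Next I would set up an induction on the number of generators $n$. The base case $n = 1$ is easy: a balanced presentation $\langle x \mid w \rangle$ of the trivial group forces $w$ to freely equal $x^{\pm 1}$ after cyclic reduction, and a single AC move trivialises it. For the inductive step one would like to produce, via AC moves, a relator freely equal to a single generator $x_n$; conjugating to kill the other occurrences of $x_n$ among the remaining relators would then allow a Tietze-style elimination of $x_n$, yielding a balanced presentation of the trivial group on $n-1$ generators to which the inductive hypothesis applies. A supporting strategy would be to exploit the fact that the relators normally generate the free group $F_n$, so the images of $R_1,\ldots,R_n$ in the abelianisation $\Z^n$ form an invertible integer matrix; one could then try to mirror row operations on this matrix with AC moves on the relators themselves, aiming to reach a presentation whose abelianised relator matrix is the identity.

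The main obstacle — and the reason this statement has stood open since 1965 — is precisely the inductive step. Abelian row reduction lifts to Nielsen transformations on the \emph{free} level, but AC moves only allow conjugation, not arbitrary multiplication by commutators, so the passage from ``abelianly trivial'' to ``freely equal to a generator'' is exactly what is not known to be always possible. The standard obstruction family is the Akbulut--Kirby presentations
\[
  \langle x,y \mid xyx = yxy,\ x^n = y^{n+1} \rangle, \qquad n \geq 2,
\]
which are known (via handle calculus on slice knots) to present the trivial group yet have resisted every attempted AC trivialisation, including extensive computer search; similar remarks apply to the Miller--Schupp family. Any genuine proof would therefore need either a new global invariant of AC-equivalence classes that forces collapsibility, or a non-constructive argument bypassing the explicit reduction — both of which would constitute a major breakthrough. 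Consequently, the honest forward-looking assessment is that this last step is where any attempt, including mine, is expected to break down, and I would not anticipate completing a proof within the combinatorial framework sketched above.
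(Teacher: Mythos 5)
This statement is the Andrews--Curtis Conjecture itself: the paper states it as a \emph{conjecture}, cites Andrews and Curtis for its origin, and offers no proof --- it enters only as a hypothesis in the conditional corollary (``if the Andrews--Curtis Conjecture is true and all LOTs are aspherical, then \ldots''). So there is no proof in the paper to compare yours against, and the problem has been open since 1965. Your write-up correctly recognizes this. The combinatorial translation you give is accurate: collapsing a spanning tree reduces to a standard complex of a balanced presentation of the trivial group, $Q^{**}$-transformations (3-deformations) correspond to AC-moves on the presentation, and the conjecture becomes the statement that every balanced presentation of the trivial group AC-trivializes. Your diagnosis of where the induction breaks --- that abelianized row reduction does not lift to AC-moves because conjugation alone cannot realize arbitrary commutator corrections, and that the Akbulut--Kirby and Miller--Schupp families are the standard suspected counterexamples --- is also correct. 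In short: you have not proved the statement, but no one has, and you are right not to claim otherwise. The only thing to flag is that, for the purposes of this paper, nothing needs to be proved here; the authors use the conjecture purely as an assumption, so an attempted proof is outside the paper's scope.
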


In particular, it was shown by Howie \cite[Theorem 4.2.]{Howie:Whitehead} that if the Andrews-Curtis Conjecture \cite{Andrews:Curtis:Conjecture} is true, then any 2-complex of the form $K = L \setminus e$, where $L$ is a finite contractible 2-complex and $e$ a 2-cell of $L$, has the simple homotopy type of the complement of a \textit{ribbon disc} (we omit the precise definition; see \cite{Howie:Whitehead} for further information). Thus, on the one hand ribbon discs are closely related to the Finite Whitehead Conjecture due to Theorem \ref{Thm:HowieCounterexampeType}. On the other hand, it was shown by Howie \cite[Propositions 3.1. and 3.2.]{Howie:RibbonDiscs} that ribbon discs are closely related to LOTs. 

For convenience of the reader we subsume the Theorem \ref{Thm:HowieCounterexampeType} and Howie's results \cite[Theorem 4.2.]{Howie:Whitehead}, and \cite[Propositions 3.1. and 3.2.]{Howie:RibbonDiscs} and obtain the following corollary, which motivates studying the topology of LOT complexes; see also \cite[Corollary 4.2.]{Rosebrock:WhiteheadOverview}.

\begin{cor}
If the Andrews-Curtis Conjecture is true and all LOTs are aspherical, then the finite Whitehead Conjecture \ref{Conj:FiniteWhitehead} is true (i.e., no counterexample of Type (1) in Theorem \ref{Thm:HowieCounterexampeType} exists).
\end{cor}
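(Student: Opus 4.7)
The plan is to argue by contradiction, chaining the three results cited in this section. Suppose, for the sake of contradiction, that both hypotheses hold---the Andrews--Curtis Conjecture is true and every LOT is aspherical---yet the Finite Whitehead Conjecture \ref{Conj:FiniteWhitehead} fails. Then Theorem \ref{Thm:HowieCounterexampeType} together with the statement (quoted just after that theorem) that any counterexample may be assumed to be of Type (1) produces a finite, contractible 2-complex $L$ together with a 2-cell $e$ of $L$ such that $K := L \setminus e$ is not aspherical.

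Next I would exploit the two standing hypotheses in succession. Because the Andrews--Curtis Conjecture holds, $L$ 3-deforms to a single vertex, so Howie's Theorem 4.2 in \cite{Howie:Whitehead} applies to $K$ and shows that $K$ has the simple homotopy type of the complement of a ribbon disc. Howie's Propositions 3.1 and 3.2 in \cite{Howie:RibbonDiscs} then identify, up to homotopy equivalence, any such ribbon disc complement with the standard 2-complex $C(P(\Gamma))$ associated to some LOT $\Gamma$. Composing the two equivalences, $K$ is homotopy equivalent to $C(P(\Gamma))$. Since every LOT is aspherical by hypothesis, $\pi_2(C(P(\Gamma))) = 0$, and homotopy invariance of $\pi_2$ forces $\pi_2(K) = 0$, contradicting the choice of $K$ as non-aspherical. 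Hence the Finite Whitehead Conjecture must hold.

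The argument amounts to a clean assembly of the three quoted results, so there is no genuine combinatorial or topological obstacle---Howie's two theorems have been designed precisely to interlock in this way. The only step requiring care is tracking that the simple homotopy equivalences produced by Howie's constructions really do compose into a homotopy equivalence between $K$ and a LOT 2-complex, so that asphericity transfers intact; this is a standard property of simple homotopy equivalence and requires no further work beyond faithful citation of the quoted statements.
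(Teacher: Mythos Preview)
Your assembly of Howie's results is exactly what the paper intends; the paper itself gives no proof for this corollary, merely stating that it ``subsumes'' Theorem~\ref{Thm:HowieCounterexampeType}, \cite[Theorem~4.2]{Howie:Whitehead}, and \cite[Propositions~3.1, 3.2]{Howie:RibbonDiscs}. Your chain---Type~(1) counterexample $\Rightarrow$ (via Andrews--Curtis and Howie~4.2) ribbon disc complement $\Rightarrow$ (via Howie~3.1/3.2) LOT complex $\Rightarrow$ aspherical---is the intended one.

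There is, however, one misreading you should fix. You write that ``the statement (quoted just after that theorem)'' shows any counterexample may be assumed to be of Type~(1). The statement quoted there is Luft's result, and it says the opposite: any counterexample yields one of Type~(2). Fortunately you do not need Luft at all. The corollary's own parenthetical identifies the Finite Whitehead Conjecture with the assertion that \emph{no Type~(1) counterexample exists}; so assuming its failure hands you a Type~(1) counterexample directly, without appeal to any auxiliary reduction. (Alternatively, if you start from Conjecture~\ref{Conj:FiniteWhitehead} as literally stated, Howie's Theorem~\ref{Thm:HowieCounterexampeType} applied in the finite setting already forces Type~(1), since Type~(2) requires an infinite ascending chain.) Once you delete the erroneous appeal to Luft, the argument is clean.
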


The set of possible counterexamples has been reduced in the past. Particularly, in \cite{Rosebrock:LOTComplexity} Rosebrock proved that all LOTs of complexity two are aspherical, see also Lemma \ref{Thm:LOTComplexity2Aspherical}. Furthermore, a is LOG \textit{injective} if its labeling function is injective, i.e., every vertex is assigned at most once to an edge as a label. Recently, Harlander and Rosebrock showed in \cite{Harlander:Rosebrock} that every injective LOT is aspherical.

A consequence of our results in the following section is that every LOT with maximal complexity is aspherical, see Theorems \ref{Thm:UpperBound} and \ref{Thm:Aspherical}.

\section{The Complexity of Labeled Oriented Trees}
\label{Sec:Complexity}

In this section we prove our main Theorems \ref{Thm:UpperBound} and \ref{Thm:Aspherical}. We begin with a formal definition of the complexity of a labeled oriented graph.

Let $\Gamma$ be a LOG and $S \subseteq V(\Gamma)$. We define the set $T_S \subseteq V(\Gamma)$ of \textit{reachable} (Rosebrock calls them, following Ivanov \cite{Ivanov}, ``good'') vertices (from $S$) recursively as follows.
\begin{enumerate}
 \item If $x_i \in S$, then $x_i$ is reachable.
 \item If $x_i$ is incident to an edge $e = (x_i,x_j)$ and $x_j$ as well as the label $l(e)$ are reachable, then $x_i$ is also reachable (the orientation does not matter).
\end{enumerate}
If every vertex is reachable from $S$, i.e., $T_S = V(\Gamma)$, then we say $\Gamma$ is \textit{reachable} (from $S$).\\

In \cite{Rosebrock:LOTComplexity} Rosebrock defines the \textit{complexity} $\cp(\Gamma) \in \N$ of $\Gamma$ as the minimum of the cardinalities of all sets $S \subseteq V(\Gamma)$ such that $\Gamma$ is reachable from $S$. See Figure \ref{Fig:Reachable} for an example. Since the orientations of the edges have no impact on the complexity, we omit it from here on and investigate the corresponding undirected graph.

\begin{figure}
\begin{tikzpicture}
   \draw (0,0) -- (2,0);
   \draw (2,0) -- (2,2);
   \draw [very thick, blue] (2,2) -- (2,4);
   \draw (0,2) -- (2,0);
   \draw (2,0) -- (4,2);
   
   \draw [fill] (0,0) circle [radius=0.1];
   \draw [fill] (2,0) circle [radius=0.1];
   \draw [fill] (0,2) circle [radius=0.1];
   \draw [fill=red] (2,4) circle [radius=0.2];
   \draw [fill=red] (4,2) circle [radius=0.2];
   \draw [fill=blue] (1.85,1.85) rectangle (2.15,2.15);
   
   \node [above, red] at (2,4.3) {$x_1$};
   \node [above] at (0,2.3) {$x_2$};
   \node [left, above, blue] at (1.7,2.2) {$x_3$};
   \node [above, red] at (4,2.3) {$x_4$};
   \node [below] at (0,-0.3) {$x_5$};
   \node [below] at (2,-0.3) {$x_6$};
   
   \node [right, red] at (2,3) {$x_4$};
   \node [right] at (2,1) {$x_2$};
   \node [right, above] at (1.1,1.1) {$x_1$};
   \node [right, below] at (3.1,0.9) {$x_5$};
   \node [below] at (1,0) {$x_4$};

   \draw (5,0) -- (7,0);
   \draw [very thick, blue] (7,0) -- (7,2);
   \draw (7,2) -- (7,4);
   \draw [very thick, blue] (5,2) -- (7,0);
   \draw (7,0) -- (9,2);
   
   \draw [fill] (5,0) circle [radius=0.1];
   \draw [fill=red] (7,0) circle [radius=0.2];
   \draw [fill=blue] (4.85,1.85) rectangle (5.15,2.15);
   \draw [fill=red] (7,4) circle [radius=0.2];
   \draw [fill] (9,2) circle [radius=0.1];
   \draw [fill=blue] (6.85,1.85) rectangle (7.15,2.15);
   
   \node [above, red] at (7,4.3) {$x_1$};
   \node [above, blue] at (5,2.3) {$x_2$};
   \node [left, above, blue] at (6.7,2.2) {$x_3$};
   \node [above] at (9,2.3) {$x_4$};
   \node [below] at (5,-0.3) {$x_5$};
   \node [below, red] at (7,-0.3) {$x_6$};
   
   \node [right] at (7,3) {$x_4$};
   \node [right, blue] at (7,1) {$x_2$};
   \node [right, above, red] at (6.1,1.1) {$x_1$};
   \node [right, below] at (8.1,0.9) {$x_5$};
   \node [below] at (6,0) {$x_4$};

   \draw [very thick, blue] (10,0) -- (12,0);
   \draw [very thick, blue] (12,0) -- (12,2);
   \draw [very thick, blue] (12,2) -- (12,4);
   \draw [very thick, blue] (10,2) -- (12,0);
   \draw [very thick, blue] (12,0) -- (14,2);
   
   \draw [fill=blue] (9.85,-0.15) rectangle (10.15,0.15);
   \draw [fill=red] (12,0) circle [radius=0.2];
   \draw [fill=blue] (9.85,1.85) rectangle (10.15,2.15);
   \draw [fill=red] (12,4) circle [radius=0.2];
   \draw [fill=red] (14,2) circle [radius=0.2];
   \draw [fill=blue] (11.85,1.85) rectangle (12.15,2.15);
   
   \node [above, red] at (12,4.3) {$x_1$};
   \node [above, blue] at (10,2.3) {$x_2$};
   \node [left, above, blue] at (11.7,2.2) {$x_3$};
   \node [above, red] at (14,2.3) {$x_4$};
   \node [below, blue] at (10,-0.3) {$x_5$};
   \node [below,red] at (12,-0.3) {$x_6$};
   
   \node [right, red] at (12,3) {$x_4$};
   \node [right, blue] at (12,1) {$x_2$};
   \node [right, above, blue] at (11.1,1.1) {$x_1$};
   \node [right, below, blue] at (13.1,0.9) {$x_5$};
   \node [below, red] at (11,0) {$x_4$};
\end{tikzpicture}
\caption{A LOT $\Gamma$ with three different sets $S \subset V(\Gamma)$ (red big circles) and their reachable sets $T_S \subseteq V(\Gamma)$ (red big circles plus blue squares). Only for the right set $S$ the entire LOT $\Gamma$ is reachable.}
\label{Fig:Reachable}
\end{figure}
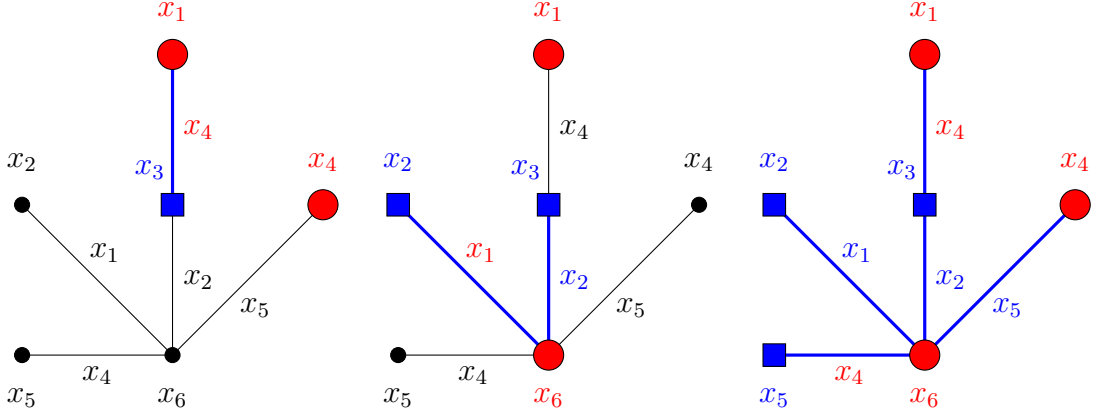

Obviously, the complexity of every interior reduced LOG $\Gamma$ is bounded from below by $2$. Furthermore, it is obviously bounded from above by the minimum of $|V(\Gamma)|$ and 1 plus the cardinality of the image of the labeling $l$ of $\Gamma$.\\

In order to prove our first main result, we will need the following technical statement.

\begin{lemma}
Let $\Gamma$ be a connected, interior reduced labeled oriented tree and $S \subset V(\Gamma)$. For every $x_i \in V(\Gamma)$ holds that if $V(\Gamma) \setminus \{x_i\}$ is reachable from $S$, then $V(\Gamma)$ is reachable from $S$.
\label{Lem:AllReachable}
\end{lemma}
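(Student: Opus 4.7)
The plan is to proceed by contradiction and extract $x_i$ directly using reachability rule (2), exploiting the interior reduced hypothesis in an essential way. First I would assume the conclusion fails, i.e.\ that $V(\Gamma)\setminus\{x_i\}\subseteq T_S$ but $x_i\notin T_S$. Since $\Gamma$ is connected and (the interesting case is) $|V(\Gamma)|\geq 2$, the vertex $x_i$ has at least one neighbor; pick any edge $e$ incident to $x_i$ and write $e=(x_i,x_j)$ in the underlying undirected tree.

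Next I would inspect the two ``witnesses'' needed to apply rule (2) to $x_i$ along $e$. The other endpoint $x_j$ lies in $V(\Gamma)\setminus\{x_i\}$, hence $x_j\in T_S$ by hypothesis. For the label, write $x_k:=l(e)$. This is precisely the spot where interior reducedness does the work: by definition $l(e)\notin\{x_i,x_j\}$, so in particular $x_k\neq x_i$, which forces $x_k\in V(\Gamma)\setminus\{x_i\}$ and thus $x_k\in T_S$ by hypothesis. Since $x_i$ is incident to $e$ and both $x_j$ and $l(e)$ are reachable, rule (2) of the recursive definition of $T_S$ yields $x_i\in T_S$, contradicting the assumption.

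I expect no genuine obstacle: the statement is essentially a one-step unfolding of the definition of reachability, and the only subtle point is to notice that without the interior reduced condition the label of an edge at $x_i$ could itself be $x_i$, which would block the application of rule (2) and render the lemma false (one could otherwise rig an isolated vertex situation). Since $\Gamma$ is a tree, no induction on cycles, orientations, or path structure is needed; the connectedness is used only to guarantee that $x_i$ has a neighbor. Thus the proof is a short proof-by-contradiction, with the interior reduced hypothesis supplying the crucial inequality $l(e)\neq x_i$.
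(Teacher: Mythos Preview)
Your proposal is correct and follows essentially the same argument as the paper: use connectedness to find an edge $e=(x_i,x_j)$ incident to $x_i$, use interior reducedness to conclude $l(e)\neq x_i$, and then apply rule~(2) since both $x_j$ and $l(e)$ lie in $V(\Gamma)\setminus\{x_i\}\subseteq T_S$. The only cosmetic difference is that you phrase it as a proof by contradiction, whereas the paper argues directly.
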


\begin{proof}
Let $V(\Gamma) \setminus \{x_i\}$ be reachable from $S$ for some $x_i \in V(\Gamma)$. Since $\Gamma$ is connected, there exists an edge $e = (x_i,x_j)$ and label $l(e) = x_k$ for some $x_j \in V \setminus \{x_i\}$ and $x_k \in V$. Since $\Gamma$ is interior reduced, we know $x_k \neq x_i$. Thus, $x_j$ and $x_k$ are reachable, since $V(\Gamma) \setminus \{x_i\}$ is reachable. And since $e$ represents the relation $x_k x_j x_k^{-1} = x_i$ (up to orientation), $x_i$ is reachable.
\end{proof}

Now, we can prove Theorem \ref{Thm:UpperBound}, stating that the complexity of an interior reduced, connected LOG with $m$ vertices is bounded from above by $\frac{m + 1}{2}$. It suffices to prove the statement for LOTs since every connected graph has a minimal spanning tree and additional edges can only decrease the complexity. The proof is constructive. We give an algorithm, that successively constructs a subset $S$ of the generators of $P$ such that in the end $S$ has cardinality at most $\lfloor (m+1)/2 \rfloor$ and such that every vertex $x_j$ in $\Gamma$ can be visited from a vertex in $x_i \in S$ along a path only using edges labeled with elements in $S$ or vertices, which were visited before in the successive steps.

\begin{proof}(Theorem \ref{Thm:UpperBound})
We construct $S$ successively as $S_1 \subset S_2 \subset \cdots \subset S$, where we set $S_1 := \{x_1\}$ for some arbitrary generator $x_1$. As an outline, the key idea of the proof is to choose in every step a generator, which, say, gives at least one additional generator for free. I.e., we choose a generator, which allows us to reach at least one additional generator, which we had not chosen before.

We denote $T_k \subset V(\Gamma)$ as the set of generators $x_i \in V(\Gamma)$ reachable from $S_k$. If after $k$ steps $T_k = V(\Gamma)$, then we set $S := S_k$ and stop. Otherwise, we define $S_{k+1} := S_k \cup \{x_i\}$ for some $x_i$ such that
\begin{enumerate}
 \item $x_i \notin T_k$ and
 \item there exists an edge $e \in E(\Gamma)$ such that
 \begin{enumerate}
  \item  $l(e) = x_i$ and
  \item for $v(e) = \{x_j,x_j'\}$ holds: $x_j \in T_k$ and $x_j' \in V(\Gamma) \setminus T_k$.
 \end{enumerate}
\end{enumerate}

Such an $x_i$ always exists: since $T_k \neq V(\Gamma)$ and $\Gamma$ is connected we find an edge $e \in E(\Gamma)$ connecting some $x_j \in T_k$ with some $x_j' \in V(\Gamma) \setminus T_k$. But, if we had now $l(e) \in T_k$, then $x_j'$ was reachable from $S_k$ by definition of $T_k$, and thus we had $x_j' \in T_k$, which is a contradiction. Hence, $l(e) \in V(\Gamma) \setminus T_k$ and we can set $x_i := l(e)$. Therefore, with $S_{k+1} := S_k \cup \{x_i\}$ we obtain $T_{k+1} \subseteq T_k \cup \{x_i,x_j'\}$ where $T_k \cap \{x_i,x_j'\} = \emptyset$. Furthermore, $x_i \neq x_j'$ since $\Gamma$ is interior reduced.

This construction ensures that for every $k > 1$, the cardinality of the set $T_k$ increases by at least two but the cardinality of $S_k$ increases only by one. Hence, for $m$ odd $T_{(m+1)/2}$ equals $V(\Gamma)$, and for $m$ even $T_{m/2}$ equals $V(\Gamma)$ with Lemma \ref{Lem:AllReachable}. I.e., all elements are reachable from $S_{(m+1)/2}$ respectively $S_{m/2}$ and, since the cardinality of $S_k$ equals $k$, we have $\cp(\Gamma) \leq (m + 1)/2$.
\end{proof}

In order to tackle the second main statement, Theorem \ref{Thm:Aspherical}, which states that the upper bound for the complexity is sharp and LOTs of maximal complexity are aspherical, we need to introduce some more notation.

\begin{defn}
Let $\Gamma$ and $\Gamma_1,\Gamma_2$ be LOTs. We say that $\Gamma$ is \textit{decomposable} into $\Gamma_1,\Gamma_2$, noted as
\begin{eqnarray}
 \Gamma & = & \Gamma_1 \sqcup \Gamma_2, \label{Equ:Decomposition}
\end{eqnarray}
if $\Gamma_1$ and $\Gamma_2$ have distinct vertices and labels and $\Gamma$ is obtained by identifying one vertex of $\Gamma_1$ with one vertex of $\Gamma_2$. If such a decomposition exists, then we say $\Gamma_1$ and $\Gamma_2$ are contained in $\Gamma$.
\end{defn}

Note that group theoretically that means if $\Gamma_1$ corresponds to a presentation  $P_1 = \langle x_1,\ldots,x_n \ | \ R_1,\ldots,R_k \rangle$ and $\Gamma_2$ to a presentation $P_2 = \langle y_1,\ldots,y_m \ | \ S_1,\ldots,S_l \rangle$, then $\Gamma = \Gamma_1 \sqcup \Gamma_2$ has the presentation
\begin{eqnarray*}
 P & = & \langle x_1,\ldots,x_n, y_1,\ldots,y_m \ | \ R_1,\ldots,R_k, S_1,\ldots,S_l, x_i = y_j \rangle,
\end{eqnarray*}
where $x_i$ and $y_j$ are the generators, which are identified. This presentation can obviously be transformed into at LOT presentation
\begin{eqnarray*}
 P' & = & \langle x_1,\ldots,x_n, y_1,\ldots,y_{j-1}, y_{j+1},\ldots,y_m \ | \ R_1,\ldots,R_k, S_1',\ldots,S_l' \rangle,
\end{eqnarray*}
where each $S_r'$ is obtained from $S_r$ by replacing $y_j$ by $x_i$. We remark that the transformation used here is actually a $Q^{**}$ transformation, which corresponds to a 3-deformation in the associated 2-complex; see for example \cite{Christmann:Diplomarbeit, Hog-Angeloni:Metzler} for further details. Thus, it is in particular not violating the assumptions in Howie's Theorems, see Section \ref{Sec:Whitehead}.

It is easy to see that the group $G$ presented by $P$ respectively $P'$ is an amalgam of the groups $G_1$ and $G_2$ presented by $P_1$ and $P_2$ with the free group generated by $x_i$ as joint subgroup, see \cite{Stillwell}.\\

Similarly, we say that a LOT $\Gamma$ is \textit{decomposable} into LOTs $\Gamma_1,\ldots,\Gamma_s$, noted as
\begin{eqnarray}
 \Gamma & = & \Gamma_1 \sqcup \cdots \sqcup \Gamma_s, \label{Equ:Decomposition2}
\end{eqnarray}
if $\Gamma$ can be successively decomposed in the sense of \eqref{Equ:Decomposition}, i.e.,
\begin{eqnarray}
 \Gamma & = & (\cdots((\Gamma_1 \sqcup \Gamma_2) \sqcup \Gamma_3) \cdots \Gamma_{s-1}) \sqcup \Gamma_s. \label{Equ:Decomposition3}
\end{eqnarray}
Note that in this case $G$ is not necessarily an amalgam of $G_1,\ldots,G_s$ anymore, since the process of identifying generators is not transitive. Instead, $G$ is the result of a recursive process of taking amalgams. But it is an immediate consequence of the LOT representation $\Gamma$ and $\Gamma_1,\ldots,\Gamma_s$ that the process of identifying vertices respectively generators is associative. So we can safely leave out parenthesis in \eqref{Equ:Decomposition3} and the Notation \eqref{Equ:Decomposition2} is well defined. See Figure \ref{Fig:Decomposable} for a depiction of decomposition of LOTs.\\

\begin{figure}
 \begin{tikzpicture}[scale=1]
   \draw [blue] (0,2) -- (2,2);
   \draw [blue] (0,0) -- (0,2);
   \draw [blue] (0,2) -- (0,4);
   
   \draw [fill, blue] (2,2) circle [radius=0.1];
   \draw [fill, blue] (0,2) circle [radius=0.1];
   \draw [fill, blue] (0,0) circle [radius=0.1];
   \draw [fill, blue] (0,4) circle [radius=0.1];
   
   \node [left, blue] at (0,0) {$x_1$};
   \node [left, blue] at (0,2) {$x_2$};
   \node [left, blue] at (0,4) {$x_3$};
   \node [above, blue] at (2,2) {$x_4$};

   \draw [red] (1.5,0) -- (3,1.5);
   \draw [red] (4.5,0) -- (3,1.5);
   \draw [red] (3,1.5) -- (3,5.5);
   
   \draw [fill, red] (1.5,0) circle [radius=0.1];
   \draw [fill, red] (3,1.5) circle [radius=0.1];
   \draw [fill, red] (4.5,0) circle [radius=0.1];
   \draw [fill, red] (3,3.5) circle [radius=0.1];
   \draw [fill, red] (3,5.5) circle [radius=0.1];
   
   \node [below, red] at (1.5,-0.2) {$x_5$};
   \node [right, red] at (3.2,1.5) {$x_6$};
   \node [right, red] at (3.2,3.5) {$x_7$};
   \node [right, red] at (3.2,5.5) {$x_8$};
   \node [below, red] at (4.5,-0.2) {$x_9$};

   \draw [DarkGreen] (6,0) -- (6,4);
   
   \draw [fill, DarkGreen] (6,0) circle [radius=0.1];
   \draw [fill, DarkGreen] (6,2) circle [radius=0.1];
   \draw [fill, DarkGreen] (6,4) circle [radius=0.1];
   
   \node [right, DarkGreen] at (6.2,0) {$x_{10}$};
   \node [right, DarkGreen] at (6.2,2) {$x_{11}$};
   \node [right, DarkGreen] at (6.2,4) {$x_{12}$};

   \draw [<->, line width = 3pt] (7.5,2) -- (9,2);

   \draw [blue] (11,3) -- (12.5,1.5);
   \draw [blue] (11,1) -- (11,3);
   \draw [blue] (11,3) -- (11,5);
   
   \draw [red] (11,0) -- (12.5,1.5);
   \draw [red] (14,0) -- (12.5,1.5);
   \draw [red] (12.5,1.5) -- (12.5,5.5);
   
   \draw [DarkGreen] (12.5,3.5) -- (14.5,3.5);
   \draw [DarkGreen] (14.5,3.5) -- (14.5,1.5);
   
   \draw [fill, blue] (11,1) circle [radius=0.1];
   \draw [fill, blue] (11,3) circle [radius=0.1];
   \draw [fill, blue] (11,5) circle [radius=0.1];
   
   \node [left, blue] at (11,1) {$x_1$};
   \node [left, blue] at (11,3) {$x_2$};
   \node [left, blue] at (11,5) {$x_3$};
   
   \draw [fill, red] (11,0) circle [radius=0.1];
   \draw [fill, MyPurple] (12.5,1.5) circle [radius=0.17];
   \draw [fill, red] (14,0) circle [radius=0.1];
   \draw [fill, MyYellow] (12.5,3.5) circle [radius=0.17];
   \draw [fill, red] (12.5,5.5) circle [radius=0.1];
   
   \node [below, red] at (11,-0.2) {$x_5$};
   \node [right, MyPurple] at (12.7,1.5) {$x_4 = x_6$};
   \node [right, MyYellow] at (12.7,3.8) {$x_7 = x_{12}$};
   \node [right, red] at (12.7,5.5) {$x_8$};
   \node [below, red] at (14,-0.2) {$x_9$};
   
   \draw [fill, DarkGreen] (14.5,3.5) circle [radius=0.1];
   \draw [fill, DarkGreen] (14.5,1.5) circle [radius=0.1];
   
   \node [right, DarkGreen] at (14.7,3.5) {$x_{11}$};
   \node [right, DarkGreen] at (14.7,1.5) {$x_{10}$};
\end{tikzpicture}
\caption{A LOT, which is decomposable in three single LOTs via two identified vertices. The edge labels are left out for clarity of the figure.}
\label{Fig:Decomposable}
\end{figure}
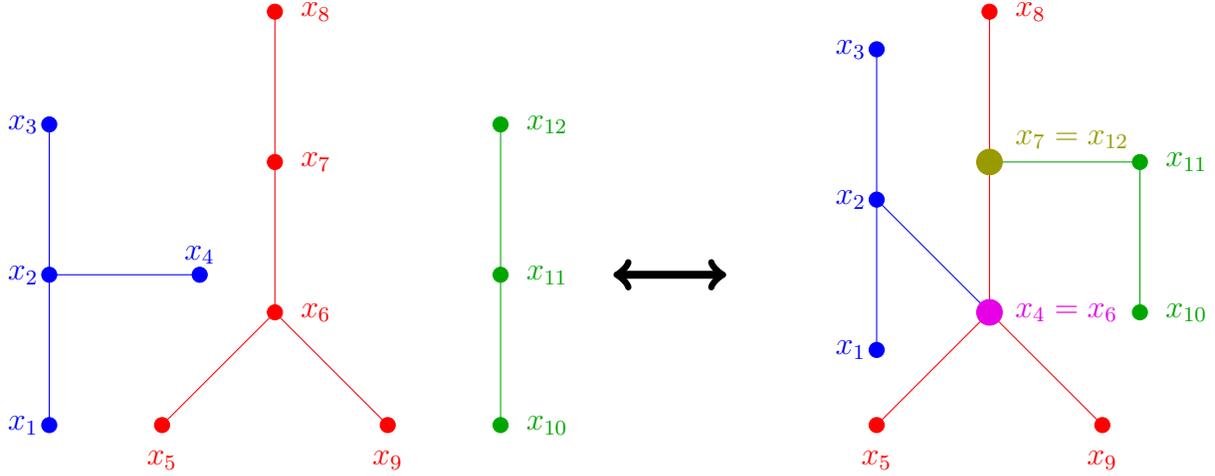

Motivated by constructions of Rosebrock used in \cite{Rosebrock:LOTComplexity} we make the following definition.

\begin{defn}
We call a LOT \textit{Rosebrock} if it is has three vertices and is of the form as shown in Figure \ref{Fig:Rosebrock3LOI}.
\label{Defn:RosebrockLOT}
\end{defn}

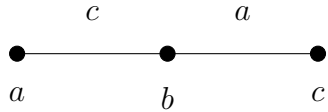
\begin{figure}
  \begin{tikzpicture}
   \draw (0,0) --(4,0);
   \draw [fill] (0,0) circle [radius=0.1];
   \draw [fill] (2,0) circle [radius=0.1];
   \draw [fill] (4,0) circle [radius=0.1];
   \node [below] at (0,-0.3) {$a$};
   \node [below] at (2,-0.3) {$b$};
   \node [below] at (4,-0.3) {$c$};
   \node [above] at (1,0.3) {$c$};
   \node [above] at (3,0.3) {$a$};
  \end{tikzpicture}
\caption{The Rosebrock LOT.}
\label{Fig:Rosebrock3LOI}
 \end{figure}

As a main step towards Theorem \ref{Thm:Aspherical} we show the following statement, which characterizes the LOTs which have maximal complexity.
 
\begin{thm}
Let $\Gamma$ be a connected, interior reduced LOT with $m$ vertices. Then
\begin{eqnarray*}
 \cp(\Gamma) &  = & \frac{m + 1}{2}
\end{eqnarray*}
if and only if $\Gamma$ is decomposable as
\begin{eqnarray}
 \Gamma & = & \Gamma_1 \sqcup \cdots \sqcup \Gamma_s, \label{Equ:RosebrockLotDecomp}
\end{eqnarray}
such that every $\Gamma_i$ is a Rosebrock LOT.
\label{Thm:MaximalComplexity}
\end{thm}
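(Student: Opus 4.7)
The plan is to prove the two implications separately, with both relying on a central \emph{subadditivity lemma}: whenever $\Gamma = \Gamma' \sqcup \Gamma''$ is a decomposition of LOTs along a joint vertex $v^*$, one has $\cp(\Gamma) \geq \cp(\Gamma') + \cp(\Gamma'') - 1$. I would prove this lemma by taking any reaching set $S$ of $\Gamma$, setting $S' := S \cap V(\Gamma')$ and $S'' := S \cap V(\Gamma'')$, and observing that since every edge of $\Gamma'$ (resp.\ $\Gamma''$) has both its endpoints and its label in $V(\Gamma')$ (resp.\ $V(\Gamma'')$), the reachability processes on the two sides can communicate only through $v^*$. A case split on whether $v^* \in S$ and, if not, on which side first reaches $v^*$ by its own process, shows in each case that $S' \cup \{v^*\}$ reaches $\Gamma'$ and $S'' \cup \{v^*\}$ reaches $\Gamma''$; adding the corresponding cardinality bounds with the appropriate correction for the shared vertex yields the lemma. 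For the ``if'' direction the claim is then quick: if $\Gamma = \Gamma_1 \sqcup \cdots \sqcup \Gamma_s$ with each $\Gamma_i$ Rosebrock, then $m = 2s+1$ and $(m+1)/2 = s+1$; the upper bound is Theorem~\ref{Thm:UpperBound}, while iterating the subadditivity lemma along pendant components of the decomposition tree with $\cp(\Gamma_i) = 2$ yields $\cp(\Gamma) \geq 2s - (s-1) = s+1$.

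For the ``only if'' direction I induct on $m$, which must be odd since $(m+1)/2$ has to be an integer. The base case $m = 3$ is immediate: interior-reducedness forces the unique Rosebrock labeling on a path of length two. For the inductive step I fix an arbitrary leaf $\ell$ of $\Gamma$ with unique neighbor $y$, set $x := l((\ell,y))$, and use that $x \notin \{\ell, y\}$ by interior-reducedness. The structural heart of the proof is to establish that $\{\ell, y, x\}$ constitutes a Rosebrock sub-LOT attached to the rest of $\Gamma$ at $x$, namely that
\begin{enumerate}
\item[(i)] $y$ has degree $2$ in $\Gamma$ with neighbors exactly $\ell$ and $x$;
\item[(ii)] the edge $(y,x)$ is labeled $\ell$; and
\item[(iii)] neither $\ell$ nor $y$ labels any edge of $\Gamma$ outside $\{(\ell,y),(y,x)\}$.
\end{enumerate}
I would rule out each possible violation of (i)--(iii) by exhibiting an explicit reaching set of size strictly less than $(m+1)/2$, contradicting maximality of $\cp(\Gamma)$. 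The guiding heuristic is that every deviation from the Rosebrock pattern around $(\ell, y, x)$ creates a ``shortcut'' --- a seed choice from which the algorithm of Theorem~\ref{Thm:UpperBound} gains more than two new vertices at some step, saving at least one element overall. I expect this exhaustive case analysis to be the main technical obstacle of the proof.

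Once (i)--(iii) are in place, $\Gamma' := \Gamma \setminus \{\ell, y\}$ is a connected, interior-reduced LOT on $m-2$ vertices satisfying $\Gamma = \Gamma' \sqcup \{\ell, y, x\}$ with $x$ as the joint vertex; note that (iii) is precisely what guarantees that $\Gamma'$ remains a valid LOT (no edge label points to a removed vertex). The subadditivity lemma gives $\cp(\Gamma) \geq \cp(\Gamma') + 1$, while adjoining $\ell$ to any reaching set of $\Gamma'$ yields a reaching set of $\Gamma$ and hence $\cp(\Gamma) \leq \cp(\Gamma') + 1$. Combining these, $\cp(\Gamma') = (m-1)/2$, so the inductive hypothesis supplies a Rosebrock decomposition of $\Gamma'$, which joined with the peeled piece at $x$ decomposes $\Gamma$ as required.
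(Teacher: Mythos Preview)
Your subadditivity lemma and the ``if'' direction are fine, and the overall inductive strategy for ``only if''---peel off a Rosebrock piece at a leaf, then recurse---is a reasonable alternative to the paper's contrapositive argument via Lemma~\ref{Lem:UniqueDecomposition}. But there is a genuine gap in the structural claim (i)--(iii): it is \emph{not} true that for an arbitrary leaf $\ell$ of a maximal-complexity LOT the neighbor $y$ must have degree~$2$ with neighbors $\{\ell,x\}$. Take two Rosebrock LOTs $a\!-\!b\!-\!c$ (labels $c,a$) and $d\!-\!e\!-\!f$ (labels $f,d$) and identify the middle vertices $b=e$. The resulting LOT has five vertices, is interior reduced, decomposes into Rosebrocks, and has complexity~$3=(5+1)/2$; yet every leaf $\ell\in\{a,c,d,f\}$ has $y=b$ of degree~$4$, so your condition~(i) fails for \emph{all} leaves simultaneously. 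Hence the case analysis you propose cannot possibly derive~(i) from maximality alone.

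The underlying issue is that the Rosebrock component containing a given leaf may be glued to the rest of $\Gamma$ at its \emph{middle} vertex $y$ rather than at the label vertex $x$; in that situation the correct piece to peel off is $\{\ell,y,x\}$ attached at $y$, with both $\ell$ and $x$ leaves of $\Gamma$, and $\Gamma'=\Gamma\setminus\{\ell,x\}$. Your argument can be repaired by adding this second configuration to the structural claim and carrying the extra cases through the analysis (and through the label-closure check analogous to~(iii)). The paper sidesteps this dichotomy entirely: it works in the contrapositive, locates an edge $e$ lying in \emph{no} Rosebrock sub-LOT (using Lemma~\ref{Lem:UniqueDecomposition} to pass from ``not globally decomposable'' to ``some edge is locally bad''), and then runs the algorithm of Theorem~\ref{Thm:UpperBound} from a well-chosen seed near $e$ to beat the bound. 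That organization avoids having to get the attachment point right.
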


Observe that a LOT with a decomposition into $s$ Rosebrock LOTs as in \eqref{Equ:RosebrockLotDecomp} has exactly $2s + 1$ vertices and $2s$ edges. Furthermore, the following lemma holds, which is needed for the proof of the Theorem \ref{Thm:MaximalComplexity}.

\begin{lemma}
Let $\Gamma$ be a connected, interior reduced LOT with $2s + 1$ vertices. If every edge $e$ of $\Gamma$ is an edge of a Rosebrock LOT $\Gamma_i$ contained in $\Gamma$, then there exists a unique decomposition
\begin{eqnarray}
 \Gamma & = & \Gamma_1 \sqcup \cdots \sqcup \Gamma_s,
\end{eqnarray}
such that every $\Gamma_i$ is a Rosebrock LOT.
\label{Lem:UniqueDecomposition}
\end{lemma}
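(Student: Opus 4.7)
The approach is to show (i) that each edge of $\Gamma$ lies in a \emph{unique} Rosebrock LOT contained in $\Gamma$, and (ii) that the resulting edge partition gives rise to a $\sqcup$-decomposition via induction on $s$. For (i), fix an edge $e = (x_i, x_j)$ with $l(e) = x_k$. Any Rosebrock containing $e$ must have vertex set $\{x_i, x_j, x_k\}$, and its second edge is either $(x_j, x_k)$ labeled $x_i$ (when $x_j$ is the middle) or $(x_i, x_k)$ labeled $x_j$ (when $x_i$ is the middle). Both cannot simultaneously occur, since together with $(x_i, x_j)$ they would form a triangle on $\{x_i, x_j, x_k\}$, contradicting the tree property. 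Combined with the hypothesis that every edge lies in some Rosebrock, each edge lies in exactly one; and since $|E(\Gamma)| = 2s$ with each Rosebrock using two edges, this canonically partitions $E(\Gamma)$ into $s$ Rosebrock pairs $\Gamma_1, \dots, \Gamma_s$. Uniqueness of any $\sqcup$-decomposition into Rosebrocks follows immediately, since any such decomposition must respect this edge partition.

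For (ii), I would induct on $s$, the base case $s = 1$ being trivial. For $s \geq 2$ the crucial step is to find a \emph{peelable} Rosebrock $\Gamma_j$, i.e.\ one whose vertex set intersects the remainder of $\Gamma$ in exactly one vertex. Set $k_v := \#\{i : v \in V(\Gamma_i)\}$. From $\sum_v k_v = 3s$ and $|V(\Gamma)| = 2s + 1$ one obtains $\sum_v (k_v - 1) = s - 1$, so at most $s - 1$ vertices are \emph{shared} ($k_v \geq 2$) and therefore at least $s + 2$ vertices are \emph{unshared} ($k_v = 1$). Since each unshared vertex lies in exactly one $\Gamma_i$, summing $|V(\Gamma_i) \cap \{v : k_v = 1\}|$ over $i$ yields at least $s + 2 > s$, so by pigeonhole some $\Gamma_j$ contains at least two unshared vertices; for $s \geq 2$ not all three can be unshared, since then $V(\Gamma_j)$ would be disjoint from every other $V(\Gamma_i)$ and $\Gamma$ would be disconnected.

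Having picked such a $\Gamma_j$, I would remove its two unshared vertices together with the two edges of $\Gamma_j$. Because $k_v = 1$ says those vertices are not endpoints or labels of any edge outside $\Gamma_j$, the removal deletes only $\Gamma_j$'s edges and leaves a connected, interior reduced LOT $\Gamma'$ on $2(s-1) + 1$ vertices that still has every edge contained in a Rosebrock in $\Gamma'$---namely, the same Rosebrock $\Gamma_i$ ($i \neq j$) as in $\Gamma$, whose three vertices are unaffected by the removal. By induction $\Gamma'$ decomposes uniquely as $\bigsqcup_{i \neq j} \Gamma_i$, and identifying $\Gamma'$ with $\Gamma_j$ at the unique shared vertex of $\Gamma_j$ yields $\Gamma = \Gamma' \sqcup \Gamma_j = \Gamma_1 \sqcup \cdots \sqcup \Gamma_s$.

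The main obstacle I expect is the existence of a peelable Rosebrock, which requires both the sharp counting identity $\sum_v (k_v - 1) = s - 1$ and the pigeonhole argument above, together with ruling out the degenerate case where all three vertices of a Rosebrock are unshared. Once these are in place, the inductive step is essentially automatic: the equivalence ``$k_v = 1 \Leftrightarrow v$ appears neither as an endpoint nor as a label of any edge outside $v$'s unique Rosebrock'' is built into the definition of $V(\Gamma_i)$, so $\Gamma'$ transparently inherits the hypothesis and the induction closes cleanly.
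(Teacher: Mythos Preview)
Your argument is correct, but it takes a different route from the paper's. The paper never introduces the multiplicities $k_v$ or the pigeonhole step; instead it simply starts at a \emph{leaf} $a$ of $\Gamma$. The unique edge $e_1=(a,b)$ at that leaf, with label $c$, forces the Rosebrock $\Gamma_1$ on $\{a,b,c\}$ (the second edge must be $(b,c)$ with label $a$, since $a$ is a leaf). The paper then writes $\Gamma=\Gamma_1\sqcup\Gamma_b\sqcup\Gamma_c$, where $\Gamma_b,\Gamma_c$ are the subtrees hanging off $b$ and $c$, checks that any Rosebrock containing an edge of $\Gamma_b$ must already lie inside $\Gamma_b$, and applies strong induction to both pieces.

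What each approach buys: the paper's leaf argument is shorter---a leaf is automatically ``peelable,'' so no counting is needed---but it pays for this by splitting into up to three pieces and having to verify that the hypothesis is inherited by each subtree. Your approach does more upfront work (the uniqueness argument in part~(i) and the identity $\sum_v(k_v-1)=s-1$), but it yields a cleaner two-piece split $\Gamma=\Gamma'\sqcup\Gamma_j$ and, more importantly, makes the \emph{uniqueness} of the decomposition completely transparent: once you know each edge lies in exactly one Rosebrock, any $\sqcup$-decomposition into Rosebrocks is forced to coincide with that canonical edge partition. In the paper's proof the uniqueness is correct but woven into the induction rather than isolated as a separate structural fact.
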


This means that the existence of a decomposition of the Form \eqref{Equ:RosebrockLotDecomp} into Rosebrock LOTs is a local property of the graph.

\begin{proof}(Lemma \ref{Lem:UniqueDecomposition})
We prove the statement by strong induction over $s$. If $s = 1$, then $\Gamma$ has two edges and since $\Gamma$ is interior reduced, it has to be Rosebrock; see Figure \ref{Fig:Rosebrock3LOI}. Assume the statement holds for all interior reduced LOTs with at most $2s$ edges and let $|E(\Gamma)| = 2(s + 1)$. We investigate an edge $e_1$ connecting a leaf $a$ of $\Gamma$ with some vertex $b$. Since $\Gamma$ is interior reduced, we have $l(e_1) = c$ with $c \notin \{a,b\}$. Since $e_1$ is an edge of a Rosebrock LOT $\Gamma_1$ contained in $\Gamma$, it follows from Definition \ref{Defn:RosebrockLOT} that there exists an edge $e_2$ connecting the vertices $b$ and $c$ and satisfying $l(e_2) = a$; see Figure \ref{Fig:Rosebrock3LOI} once more. Thus, we can make a decomposition
\begin{eqnarray*}
 \Gamma & = & \Gamma_1 \sqcup \Gamma_b \sqcup \Gamma_c,
\end{eqnarray*}
where $\Gamma_b$ and $\Gamma_c$ are the subtrees of $\Gamma$ containing $b$ respectively $c$ (one tree is possibly just one vertex). We investigate an arbitrary edge $e_3$ in $\Gamma_b$. Since $e_3$ is edge of $\Gamma$ it is the edge of a Rosebrock LOT $\Gamma_2$.

\textbf{Claim}: $\Gamma_2$ is contained in $\Gamma_b$. Namely, $\Gamma_b$ and $\Gamma_c$ are only connected via the edge $e_2$, since $\Gamma$ is a tree. Furthermore, $\Gamma_2$ is a Rosebrock LOT in $\Gamma$. Thus, if $\Gamma_2$ was not contained in $\Gamma_b$, then the only possibility is that the second edge of $\Gamma_2$ is $e_1$ or $e_2$. But then $e_3$ would connect the vertices $b$ and $a$ or $b$ and $c$, which is impossible, since $e_3$ is not contained in $\Gamma_1$ and every vertex appears only once. Thus, $\Gamma_2$ is contained in $\Gamma_b$.

But since $\Gamma_b$ furthermore has at most $2s$ edges and is an interior reduced LOT (since $\Gamma$ is an interior reduced LOT), it follows by the induction hypothesis that there exists a unique decomposition $\Gamma_b = \Gamma_{b_1} \sqcup \cdots \sqcup \Gamma_{b_k}$ such that every $\Gamma_{b_j}$ is a Rosebrock LOT. Analogously for $\Gamma_c$. Hence, we obtain a unique decomposition
\begin{eqnarray*}
\Gamma & = & \Gamma_1 \sqcup \Gamma_{b_1} \sqcup \cdots \sqcup \Gamma_{b_k} \sqcup \Gamma_{c_1} \sqcup \cdots \sqcup \Gamma_{c_l}
\end{eqnarray*}
of $\Gamma$ into Rosebrock LOTs. And since every Rosebrock LOT has two edges we have $k + l + 1 = 2s$.
\end{proof}

Now we can prove Theorem \ref{Thm:MaximalComplexity}.

\begin{proof}(Theorem \ref{Thm:MaximalComplexity})
First, we prove that if a LOT is decomposable as stated, then it has the claimed complexity. We make an induction over $s$ with respect to the decomposition described in \eqref{Equ:RosebrockLotDecomp}. Obviously, for $s = 1$, i.e., $\Gamma$ is a LOT as depicted in Figure \ref{Fig:Rosebrock3LOI}, $\Gamma$ has complexity two. Now, assuming that the claim holds true for $\Gamma = \Gamma_1 \sqcup \cdots \sqcup \Gamma_s$, we prove it holds for $\Gamma' = \Gamma \sqcup \Gamma_{s+1} = \Gamma_1 \sqcup \cdots \sqcup \Gamma_{s+1}$. Assume again without loss of generality that $\Gamma_{s+1}$ is given as in Figure \ref{Fig:Rosebrock3LOI} and that $a$ is the vertex, where $\Gamma$ and $\Gamma_{s+1}$ are identified. Then $\Gamma'$ has two more vertices than $\Gamma$, namely $2(s+1) + 1$. But for the complexity of $\Gamma'$ we have $\cp(\Gamma') = \cp(\Gamma) + 1$: On the one hand, since $a$ is reachable in $\Gamma$, both $b$ and $c$ are reachable, if we choose one of them. Hence, $\cp(\Gamma') \leq \cp(\Gamma) + 1$. On the other hand, adding either $b$ or $c$ to the set of generators does not lower the complexity of the $\Gamma$ part of $\Gamma'$, since, by assumption, no edge in $\Gamma$ has label $b$ or $c$. Thus, $\cp(\Gamma') \geq \cp(\Gamma) + 1$. But then we are done, since we obtain with the induction hypothesis
\begin{eqnarray*}
 \cp(\Gamma') \ = \ \cp(\Gamma) + 1 \ = \ \frac{(2s + 1) + 1}{2} + 1 \ = \ \frac{(2(s+1) + 1) + 1}{2} \ = \ \frac{m + 1}{2},
\end{eqnarray*}
where the last equation holds since $\Gamma_{s+1}$ has $2(s+1) + 1$ vertices and $m$ denotes the cardinality of the vertex set.\\

Now, assume that $\Gamma$ is a LOT, which is not decomposable in Rosebrock LOTs. By Lemma \ref{Lem:UniqueDecomposition} this means in particular that there exists an edge $e \in E(\Gamma)$, which is not part of a Rosebrock LOT. Let $l(e) = z$ and $v(e) = \{x,y\}$. By investigating different cases, we conclude that $\Gamma$ cannot have maximal complexity.\\

\noindent \textbf{Case 1}: $z$ is connected to $x$ or $y$ (without loss of generality: $x$) via an edge:

\begin{center}
 \begin{tikzpicture}
   \draw [dashed] (0,0) -- (2,0);
   \draw (2,0) -- (6,0);
   \draw [dashed] (6,0) -- (8,0);
   \draw [fill] (2,0) circle [radius=0.1];
   \draw [fill] (4,0) circle [radius=0.1];
   \draw [fill] (6,0) circle [radius=0.1];
   \node [below] at (2,-0.3) {$z$};
   \node [below] at (4,-0.3) {$x$};
   \node [below] at (6,-0.3) {$y$};
   \node [above] at (3,0.3) {$w \neq y$};
   \node [above] at (5,0.3) {$z$};
  \end{tikzpicture}
\end{center}

As depicted, the edge $e'$ connecting $z$ and $x$ with $l(e') = w$ has to satisfy $w \neq y$. Otherwise, the depicted part of $\Gamma$ would be a Rosebrock LOT (see Figure \ref{Fig:Rosebrock3LOI}), which we excluded. We start the algorithm presented in the proof of Theorem \ref{Thm:UpperBound} at $z$ and choose $w$ in the second step. With this choice we reach at least the vertices $x,y,w,z$. Afterwards, we proceed as in the proof of Theorem \ref{Thm:UpperBound}. Since we do not need to choose an initial vertex, we need to choose at most $\lfloor ((m-4) + 1)/2 \rfloor - 1$ additional vertices in order to reach every vertex of $\Gamma$. Hence, in total
\begin{eqnarray}
 \cp(\Gamma) \ \leq \ \left\lfloor \frac{m+1}{2} \right\rfloor - 1 \ < \ \frac{m+1}{2} \, . \label{Equ:ProofMaximalComplexity1}
\end{eqnarray}

\noindent \textbf{Case 2}: $z$ is not connected to $x$ or $y$ via an edge:
 
\begin{center}
 \begin{tikzpicture}
   \draw [dashed] (0,0) -- (4,0);
   \draw (4,0) -- (10,0);
   \draw [dashed] (10,0) -- (12,0);
   \draw [fill] (2,0) circle [radius=0.1];
   \draw [fill] (4,0) circle [radius=0.1];
   \draw [fill] (6,0) circle [radius=0.1];
   \draw [fill] (8,0) circle [radius=0.1];
   \draw [fill] (10,0) circle [radius=0.1];
   \node [below] at (2,-0.3) {$z$};
   \node [below] at (4,-0.3) {$\neq z$};
   \node [below] at (6,-0.3) {$x$};
   \node [below] at (8,-0.3) {$y$};
   \node [below] at (10,-0.3) {$\neq z$};
   \node [above] at (7,0.3) {$z$};
  \end{tikzpicture}
\end{center}

Since $\Gamma$ is a tree, $z$ is connected to $x$ or $y$ (without loss of generality: $x$) via a path $\gamma$ given by edges $e_1,\ldots,e_k$ such that the edge $e$ is not part of $\gamma$.

\textbf{Case 2.1}: First assume that no edge $e' \in E(\gamma) \subset E(\Gamma)$ satisfies $l(e') \in \{x,y\}$, i.e.,

\begin{center}
 \begin{tikzpicture}
   \draw [dashed] (0,0) -- (4,0);
   \draw (4,0) -- (6,0);
   \draw [dashed] (6,0) -- (8,0);
   \draw [fill] (2,0) circle [radius=0.1];
   \draw [fill] (4,0) circle [radius=0.1];
   \draw [fill] (6,0) circle [radius=0.1];
   \node [below] at (2,-0.3) {$z$};
   \node [below] at (4,-0.3) {$x$};
   \node [below] at (6,-0.3) {$y$};
   \node [above] at (5,0.3) {$z$};
  \end{tikzpicture}
\end{center}

We choose $z$ as the initial vertex in the algorithm in the proof of Theorem \ref{Thm:UpperBound}. In steps $2$ through (at most) $k$ we choose successive vertices $l(e_1),\ldots,l(e_k)$. Without loss of generality this yields two new reachable vertices in every step (not more since otherwise the maximal complexity cannot be attained anymore). But in step $k$, by our choice of $l(e_k)$, we obtain $x$ as a reachable vertex and since $z$ is reachable, we also obtain $y$ as reachable. Since $l(e_1),\ldots,l(e_k) \notin \{x,y\}$, after $k+1$ steps we have chosen at most $k+1$ vertices but reach at least $2(k+1)$ vertices. If we proceed as in Case 1, then we obtain \eqref{Equ:ProofMaximalComplexity1}.

\textbf{Case 2.2}: Now assume that there exists some edge $e_j$ in the path $\gamma$ with $j \neq k$, $l(e_j) \in \{x,y\}$, and $l(e_k) = w \notin \{x,y\}$.

\begin{center}
 \begin{tikzpicture}
   \draw [dashed] (0,0) -- (4,0);
   \draw (4,0) -- (6,0);
   \draw [dashed] (6,0) -- (8,0);
   \draw (8,0) -- (12,0);
   \draw [dashed] (12,0) -- (14,0);
   \draw [fill] (2,0) circle [radius=0.1];
   \draw [fill] (4,0) circle [radius=0.1];
   \draw [fill] (6,0) circle [radius=0.1];
   \draw [fill] (8,0) circle [radius=0.1];
   \draw [fill] (10,0) circle [radius=0.1];
   \draw [fill] (12,0) circle [radius=0.1];
   \node [below] at (2,-0.3) {$z$};
   \node [below] at (4,-0.3) {$a_{j-1}$};
   \node [below] at (6,-0.3) {$a_j$};
   \node [below] at (10,-0.3) {$x$};
   \node [below] at (12,-0.3) {$y$};
   \node [above] at (5,0.3) {$x$ \, or \, $y$};
   \node [above] at (9,0.3) {$w \neq y$};
   \node [above] at (11,0.3) {$z$};
  \end{tikzpicture}
\end{center}

Again, we choose $z$ as the initial vertex in our algorithm and $l(e_1),\ldots,l(e_{j-1})$ successively afterwards (skipping those already reachable), which yields two new reachable vertices in every step but the first one. Since $l(e_j) \in \{x,y\}$ and $z$ was already chosen, by choosing $l(e_j)$ in step $j+1$, we get three new reachable vertices, namely, $x,y$ and $a_j$. Hence, after $j+1$ steps we have chosen at most $j+1$ vertices but reach at least $2(j+1)$ vertices. If we proceed on as in Case 1, then we obtain \eqref{Equ:ProofMaximalComplexity1}.

\textbf{Case 2.3}: Finally, assume that the final edge $e_k$ satisfies $v(e_k) = \{a_k,x\}$ and $l(e_k) = y$.

\begin{center}
 \begin{tikzpicture}
   \draw [dashed] (0,0) -- (4,0);
   \draw (4,0) -- (8,0);
   \draw [dashed] (8,0) -- (10,0);
   \draw [fill] (2,0) circle [radius=0.1];
   \draw [fill] (4,0) circle [radius=0.1];
   \draw [fill] (6,0) circle [radius=0.1];
   \draw [fill] (8,0) circle [radius=0.1];
   \node [below] at (2,-0.3) {$z$};
   \node [below] at (4,-0.3) {$a_k \neq z$};
   \node [below] at (6,-0.3) {$x$};
   \node [below] at (8,-0.3) {$y$};
   \node [above] at (5,0.3) {$y$};
   \node [above] at (7,0.3) {$z$};
  \end{tikzpicture}
\end{center}

In the first two steps of our algorithm, we choose $z$ and one of the vertices $x$ or $y$. After these two steps at least the vertices $x,y,z$ and $a_k$ are reachable. Afterwards, we proceed as in Case 1 and, again, we obtain \eqref{Equ:ProofMaximalComplexity1}.
\end{proof}

To prove the asphericity of (2-complexes of) LOTs with maximal complexity we use the following lemma by Rosebrock (see \cite{Rosebrock:LOTComplexity}; we adjusted it to our notation).

\begin{lemma}[Rosebrock]
Let $\Gamma$ be a LOT with a decomposition $\Gamma = \Gamma_1 \sqcup \Gamma_2$. If both 2-complexes corresponding to $\Gamma_1$ and $\Gamma_2$ are aspherical, then also the 2-complex corresponding to $\Gamma$ is aspherical.
\label{Lem:Amalgam}
\end{lemma}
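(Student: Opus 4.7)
The plan is to realize $C(P(\Gamma))$ as a pushout of $C(P(\Gamma_1))$ and $C(P(\Gamma_2))$ along a circle, and then deduce asphericity from the Bass-Serre decomposition of its universal cover.

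First, since $\Gamma = \Gamma_1 \sqcup \Gamma_2$ is obtained by identifying a vertex $x_i$ of $\Gamma_1$ with a vertex $y_j$ of $\Gamma_2$, the standard 2-complex $C(P(\Gamma))$ has one 0-cell, one 1-cell per generator (with $x_i$ and $y_j$ represented by a single 1-cell $e$), and one 2-cell per relation (with each relation of $\Gamma_2$ rewritten to use $x_i$ instead of $y_j$, exactly as in the transformation from $P$ to $P'$ in the excerpt). Consequently
\[
 C(P(\Gamma)) \;=\; C(P(\Gamma_1)) \,\cup_{S}\, C(P(\Gamma_2)),
\]
where $S$ is the loop consisting of the 0-cell together with the identified 1-cell $e$, embedded as a circle in both $C(P(\Gamma_1))$ and $C(P(\Gamma_2))$.

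Second, I would verify that $S \hookrightarrow C(P(\Gamma_k))$ is $\pi_1$-injective for $k=1,2$, i.e., that the identified generator has infinite order in both LOT groups $G_k = \pi_1(C(P(\Gamma_k)))$. Abelianizing a LOT presentation turns each relation $x_l x_a x_l^{-1} = x_b$ into $x_a = x_b$, so on a connected LOT all generators collapse to a single class and $G_k^{ab} \cong \Z$. Every generator therefore projects to a generator of $\Z$ and in particular has infinite order. By Seifert-van Kampen this identifies $\pi_1(C(P(\Gamma)))$ with the amalgamated product $G_1 *_{\Z} G_2$, where the amalgamating $\Z$ is generated by the identified vertex class.

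Third, for asphericity I would show that the universal cover $\widetilde{C(P(\Gamma))}$ is contractible. By Bass-Serre theory applied to the amalgam $G_1 *_{\Z} G_2$, this universal cover has the structure of a tree of spaces whose vertex spaces are translates of the contractible universal covers $\widetilde{C(P(\Gamma_k))}$ (contractible by hypothesis of asphericity of the $C(P(\Gamma_k))$) and whose edge spaces are translates of $\R \simeq \widetilde{S}$. Building $\widetilde{C(P(\Gamma))}$ as an ascending exhaustion over finite subtrees of this graph of spaces, each step glues a new contractible vertex space onto an already-contractible partial cover along a contractible edge space. An inductive application of the Mayer-Vietoris/Whitehead argument (a CW pushout of contractible complexes along a contractible subcomplex is contractible) yields that $\widetilde{C(P(\Gamma))}$ is contractible, hence $\pi_2(C(P(\Gamma)))=0$.

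The main obstacle is the Bass-Serre bookkeeping: one must be sure that the two copies of $S$ genuinely sit as $\pi_1$-injective circular subcomplexes on both sides, so that both the amalgamated-product description of $\pi_1$ and the tree-of-spaces description of the universal cover are valid. Once this injectivity is in place via the abelianization argument, the contractibility of the universal cover is essentially formal from the standard pushout-gluing principle.
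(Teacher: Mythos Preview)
The paper does not prove this lemma; it is quoted from Rosebrock \cite{Rosebrock:LOTComplexity} and used as a black box. So there is no ``paper's own proof'' to compare against, and your proposal should be read as supplying an independent argument.

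Your argument is correct and is essentially the standard one. The paper itself already records (just after the definition of $\Gamma_1 \sqcup \Gamma_2$) that the corresponding group is an amalgam $G_1 *_{\Z} G_2$ with the free cyclic group generated by the identified vertex as the amalgamating subgroup, so the amalgam/Bass--Serre route is exactly the intended mechanism. Your three steps are all sound: the pushout description of $C(P(\Gamma))$ along the circle $S$ is immediate from the presentation $P'$; the $\pi_1$-injectivity of $S$ follows because the abelianization of a connected LOT group is $\Z$ and each generator maps to a generator there; and once both inclusions $G_k \hookrightarrow G_1 *_{\Z} G_2$ are injective, the universal cover is a tree of spaces with contractible vertex and edge pieces, hence contractible.

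Two small remarks worth tightening. First, in the tree-of-spaces description the vertex pieces are genuinely copies of $\widetilde{C(P(\Gamma_k))}$ precisely because $G_k \to G_1 *_{\Z} G_2$ is injective (so the preimage of $C(P(\Gamma_k))$ in the universal cover is a disjoint union of simply connected covers); you use this implicitly and might state it. Second, the inductive contractibility over an exhaustion by finite subtrees is fine, but you should remark that a direct limit of contractible CW-complexes along cofibrations is contractible to pass to the full (possibly infinite) tree. Neither point is a gap, just a place where one extra sentence would make the write-up self-contained.
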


As already mentioned the main result of Rosebrock in \cite{Rosebrock:LOTComplexity} is the asphericity of LOTs of complexity two. We recall the statement here.

\begin{thm}[Rosebrock]
Let $\Gamma$ be a LOT of complexity two. Then its corresponding 2-complex is aspherical.
\label{Thm:LOTComplexity2Aspherical}
\end{thm}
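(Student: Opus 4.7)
The plan is to exploit the complexity-2 hypothesis to reduce the LOT presentation, via Tietze transformations corresponding to $Q^{**}$-moves, to a one-relator presentation on two generators, and then to invoke Lyndon's Identity Theorem.

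First I would unpack the hypothesis $\cp(\Gamma) = 2$: there exist $x, y \in V(\Gamma)$ with $S = \{x, y\}$ such that $T_S = V(\Gamma)$. By linearising the closure process, one obtains an ordering $x_{i_1} = x$, $x_{i_2} = y$, $x_{i_3}, \ldots, x_{i_m}$ of $V(\Gamma)$ together with pairwise distinct edges $e_3, \ldots, e_m \in E(\Gamma)$ such that, for every $k \geq 3$, $v(e_k) = \{x_{i_k}, x_{i_{j_k}}\}$ and $l(e_k) = x_{i_{h_k}}$ with $j_k, h_k < k$. The relation carried by $e_k$ has the form $x_{i_{h_k}} x_{i_{j_k}} x_{i_{h_k}}^{-1} = x_{i_k}$ (up to orientation), and thus expresses $x_{i_k}$ as an explicit conjugate word in the generators $x_{i_1}, \ldots, x_{i_{k-1}}$.

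Next I would successively eliminate $x_{i_m}, x_{i_{m-1}}, \ldots, x_{i_3}$ using these $m - 2$ relations. Each such elimination is a Tietze transformation of the first kind and corresponds to a $Q^{**}$-transformation on the standard 2-complex (compare \cite{Hog-Angeloni:Metzler, Christmann:Diplomarbeit}), hence preserves simple homotopy type. Since $\Gamma$ has $m - 1$ edges and $m - 2$ of them are consumed as elimination relations, exactly one relator $w$ in the free group on $\{x, y\}$ survives. This yields a presentation $\langle x, y \mid w \rangle$ whose standard 2-complex is 3-equivalent — and in particular homotopy equivalent — to $C(P(\Gamma))$.

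Finally I would apply Lyndon's Identity Theorem: the standard 2-complex of a one-relator presentation $\langle x, y \mid w \rangle$ is aspherical whenever $w$ is not a proper power in the free group on $\{x, y\}$. Since asphericity is a homotopy invariant, this conclusion transfers back to $C(P(\Gamma))$. The main obstacle is verifying that $w$ is indeed never a proper power. The original LOT relators have commutator-like shape $x_{i_{h}} x_{i_{j}} x_{i_{h}}^{-1} x_{i_{k}}^{-1}$, and the iterated substitutions impose a recursive structure on $w$. I expect interior-reducedness of $\Gamma$, together with the fact that every substitution replaces a letter by a non-trivial conjugate of a distinct letter, to rule out the proper-power case; pinning this down — most likely by induction on the reachability ordering $i_3, \ldots, i_m$ or by tracking exponent sums in $x$ and $y$ through the recursion — is the delicate combinatorial step.
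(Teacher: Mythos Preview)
The paper does not prove this theorem; it is quoted from \cite{Rosebrock:LOTComplexity} and used as a black box in the proof of Corollary~\ref{Cor:MaxComplLOTAspherical}. So there is no ``paper's own proof'' to compare against.

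That said, your outline is sound and the reduction to a one-relator presentation is a legitimate route. Your linearisation of the reachability process does produce $m-2$ pairwise distinct edges (each edge of a tree can hand over at most one new vertex), and eliminating along them is indeed a sequence of $Q^{**}$-moves, so the resulting 2-complex of $\langle x,y \mid w\rangle$ is 3-deformation equivalent to $C(P(\Gamma))$.

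The step you flag as ``delicate'' --- that $w$ is not a proper power --- is in fact almost free, and you are overcomplicating it by planning an induction on the reachability ordering. Since $\Gamma$ is a connected LOT, every relation abelianises to $x_i = x_j$ and the tree is connected, so $H_1$ of the LOT group is infinite cyclic. Tietze transformations preserve the group, hence $H_1(\langle x,y\mid w\rangle) \cong \Z$. Writing $(a,b)$ for the exponent-sum vector of $w$, one has $\Z^2/\langle(a,b)\rangle \cong \Z \oplus \Z/\gcd(a,b)$, which forces $\gcd(a,b)=1$; in particular $w$ is nontrivial in $F_2$ and cannot be a proper power. Lyndon's theorem then applies directly. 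With this observation your proposal becomes a complete proof; whether it coincides with Rosebrock's original argument in \cite{Rosebrock:LOTComplexity} (which is more diagrammatic in flavour) is a separate matter.
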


Now, we have everything needed to finish the proof of our second main Theorem \ref{Thm:Aspherical}.

\begin{cor}
Let $\Gamma$ be a LOT with $m$ vertices and maximal complexity, i.e., $\cp(\Gamma) = (m+1)/2$. Then its corresponding 2-complex is aspherical.
\label{Cor:MaxComplLOTAspherical}
\end{cor}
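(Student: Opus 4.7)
The plan is to combine the structural characterization provided by Theorem \ref{Thm:MaximalComplexity} with the two results of Rosebrock that are already quoted in the section, namely Lemma \ref{Lem:Amalgam} and Theorem \ref{Thm:LOTComplexity2Aspherical}. The key observation is that a LOT of maximal complexity decomposes into ``atomic'' pieces that are small enough to fall into the complexity-two case, so that asphericity can be built up from the pieces by iterated amalgamation.

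First, I would invoke Theorem \ref{Thm:MaximalComplexity} on the given LOT $\Gamma$ with $\cp(\Gamma) = (m+1)/2$. This produces a decomposition
\begin{eqnarray*}
\Gamma & = & \Gamma_1 \sqcup \cdots \sqcup \Gamma_s,
\end{eqnarray*}
in which each $\Gamma_i$ is a Rosebrock LOT (with three vertices and the edge/label pattern in Figure \ref{Fig:Rosebrock3LOI}). Next I would verify that each $\Gamma_i$ has complexity exactly two: Since $\Gamma_i$ is interior reduced, its complexity is at least two, and since it has $3$ vertices, Theorem \ref{Thm:UpperBound} forces $\cp(\Gamma_i) \le (3+1)/2 = 2$, so $\cp(\Gamma_i) = 2$. (One can also just exhibit the set $S = \{a,c\}$ from Figure \ref{Fig:Rosebrock3LOI}.) Then Theorem \ref{Thm:LOTComplexity2Aspherical} applies to each $\Gamma_i$ and yields that the 2-complex associated to $\Gamma_i$ is aspherical.

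Finally I would proceed by induction on $s$ using Lemma \ref{Lem:Amalgam}. The base case $s=1$ is the previous paragraph. For the inductive step, write $\Gamma = (\Gamma_1 \sqcup \cdots \sqcup \Gamma_{s-1}) \sqcup \Gamma_s$ using the associativity of the $\sqcup$-operation noted after \eqref{Equ:Decomposition3}. By the induction hypothesis the 2-complex of $\Gamma_1 \sqcup \cdots \sqcup \Gamma_{s-1}$ is aspherical, and the 2-complex of the Rosebrock LOT $\Gamma_s$ is aspherical as above. Applying Lemma \ref{Lem:Amalgam} to this two-part decomposition gives asphericity of the 2-complex of $\Gamma$, completing the induction.

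There is no real obstacle beyond checking that the hypotheses of Lemma \ref{Lem:Amalgam} are satisfied at each inductive step; the asphericity of the individual Rosebrock pieces is immediate from Theorem \ref{Thm:LOTComplexity2Aspherical}, and the rest is a bookkeeping argument. Thus, combined with Theorem \ref{Thm:MaximalComplexity}, this yields Theorem \ref{Thm:Aspherical}: the bound of Theorem \ref{Thm:UpperBound} is sharp (realized by any chain of Rosebrock LOTs), and every LOT saturating it has aspherical associated 2-complex.
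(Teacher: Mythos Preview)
Your proposal is correct and follows essentially the same approach as the paper: invoke Theorem~\ref{Thm:MaximalComplexity} to decompose $\Gamma$ into Rosebrock LOTs, note that each has complexity two and is therefore aspherical by Theorem~\ref{Thm:LOTComplexity2Aspherical}, and then apply Lemma~\ref{Lem:Amalgam} (iteratively) to conclude. The paper's version is just slightly terser, leaving the induction on $s$ implicit.
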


\begin{proof}(Corollary \ref{Cor:MaxComplLOTAspherical} / Theorem \ref{Thm:Aspherical})
Let $\Gamma$ be a LOT of maximal complexity. Thus, by Theorem \ref{Thm:MaximalComplexity}, $\Gamma$ has a decomposition $\Gamma = \Gamma_1 \sqcup \cdots \sqcup \Gamma_{s}$ such that every $\Gamma_j$ is a Rosebrock LOT. Every Rosebrock LOT has complexity 2, i.e., its corresponding 2-complex is aspherical by Theorem \ref{Thm:LOTComplexity2Aspherical}. Therefore, by Lemma \ref{Lem:Amalgam}, the 2-complex corresponding to $\Gamma$ is aspherical.
\end{proof}

\bibliographystyle{amsplain}
\bibliography{LOT_Complexity}

\end{document}